\documentclass[10pt]{amsart}
\usepackage{amssymb}
\usepackage{tikz}
\usepackage{latexsym}
\usepackage{epsfig}
\usepackage{graphicx}
\usepackage{psfrag}
\usepackage{ amssymb, amsmath}
\usepackage{amsfonts}
\usepackage{amsthm}
\usepackage{enumerate}
\usepackage{color}
\parindent0pt
\parskip6pt

\newtheorem{theorem}{Theorem}[section]

\newtheorem{lemma}[theorem]{Lemma}
\newtheorem{proposition}[theorem]{Proposition}
\newtheorem{observation}[theorem]{Observation}

\newcommand{\what}{\widehat}%
\newcommand{\wtilde}{\widetilde}%
\newcommand{\R}{\mathbb R}%
\newcommand{\C}{\mathbb C}%
\newcommand{\Z}{\mathbb Z}%
\newcommand{\N}{\mathbb N}%
\newcommand{\Hq}{\mathbb H}%
\newcommand{\hc}{\mathrm c}
\newcommand{\Ab}{\mathcal A}
\numberwithin{equation}{section}
\sloppy
\begin{document}
\baselineskip15pt
\author[R. P. Sarkar]{Rudra P. Sarkar}
\address[R. P. Sarkar]{Stat-Math Unit, Indian Statistical
Institute, 203 B. T. Rd., Calcutta 700108, India, E-mail:
rudra@isical.ac.in}

\title[Fourier transform supported on annulus, eigenfunction]{Function with its Fourier transform  supported on  annulus and eigenfunction of Laplacian}
\subjclass[2010]{Primary 43A85; Secondary 22E30}
\keywords{eigenfunction of Laplacian,  Riemannian symmetric
space}

\maketitle
\begin{abstract} We explore  the possibilities of reaching the characterization of eigenfunction of Laplacian as a degenerate case of the inverse Paley-Wiener theorem (characterizing functions whose Fourier transform is supported on a compact annulus) for  the Riemannian symmetric spaces of noncompact type. Most distinguished prototypes of these spaces are the hyperbolic spaces.
The statement and the proof of the main result work mutatis-mutandis for a number of spaces including Euclidean spaces and Damek-Ricci spaces.
\end{abstract}
\section{Introduction}
Let $X$ be a rank one Riemannian  symmetric space  of noncompact type of dimension  $d$, $\Delta$ be the Laplace-Beltrami operator on $X$ induced by its Riemannian structure and $B$ be its  maximal distinguished boundary which is diffeomorphic to  $\mathbb S^{d-1}$. A prototypical  example of this class of spaces is the hyperbolic $n$-space. A representative result of this note  is the following.
\begin{theorem} \label{main-result}
Suppose that for a nonzero function   $f\in L^{2, \infty}(X)$,  there are constants  $c_1\ge \rho^2, c_2 \le 1/\rho^2$ such that
\[\lim_{n\to \infty} \|\Delta^n f\|_{2, \infty}^{1/n}=c_1,\,\, \lim_{n\to \infty} \|\Delta^{-n} f\|_{2, \infty}^{1/n}=c_2.\]
Let $\beta=\sqrt{1/c_2-\rho^2} \text{ and } \alpha=\sqrt{c_1-\rho^2}$. Then we have the following conclusions.
\begin{enumerate}[{\em (a)}]
\item $c_1 c_2\ge 1$.
\item If $c_1 c_2>1$ then $\wtilde{f}$ is supported in the annulus $\mathbb A_\beta^\alpha=[\beta, \alpha]\times  B$ around origin, but not
inside any smaller annulus $\mathbb A_{\beta'}^{\alpha'}$ where $\beta<\beta'$ or $\alpha'<\alpha$.
\item If $c_1c_2=1$ then $f=\mathcal P_\alpha F$ for some $F\in L^2(B)$, which is an eigenfunction of $\Delta$  with eigenvalue $-c_1$.
\item The annulus $\mathbb A_\beta^\alpha$ containing support of $\wtilde{f}$ may reduce to  a ball around origin: $\mathbb A_0^\alpha=[0, \alpha]\times B$, but cannot collapse  to the  origin.
\end{enumerate}
\end{theorem}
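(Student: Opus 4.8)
The plan is to convert the two growth conditions into sharp information about the support of the Helgason--Fourier transform $\widetilde{f}$, and then read off (a)--(d). The organising principle is that on the Fourier side $\Delta$ acts as multiplication by $-(\lambda^2+\rho^2)$, so that $\widetilde{\Delta^{\pm n} f}(\lambda,b)=\bigl(-(\lambda^2+\rho^2)\bigr)^{\pm n}\widetilde{f}(\lambda,b)$, and consequently the quantities $\|\Delta^{\pm n}f\|_{2,\infty}^{1/n}$ ought to behave like local spectral radii. Writing $S\subseteq[0,\infty)$ for the projection to the $|\lambda|$--axis of $\operatorname{supp}\widetilde{f}$, the first step is to prove the two local spectral radius identities
\[
\lim_{n\to\infty}\|\Delta^n f\|_{2,\infty}^{1/n}=\sup_{\lambda\in S}(\lambda^2+\rho^2),\qquad
\lim_{n\to\infty}\|\Delta^{-n} f\|_{2,\infty}^{1/n}=\Bigl(\inf_{\lambda\in S}(\lambda^2+\rho^2)\Bigr)^{-1}.
\]
Granting these, the hypotheses give $\sup S=\sqrt{c_1-\rho^2}=\alpha$ and $\inf S=\sqrt{1/c_2-\rho^2}=\beta$, and the conditions $c_1\ge\rho^2$, $c_2\le 1/\rho^2$ are precisely what places $S$ in the tempered range where the spectral parameter is real — the regime in which $L^{2,\infty}$, rather than $L^2$, is the natural habitat of the generalised eigenfunctions.

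Each identity I would prove by a matching pair of inequalities. For the upper bounds I would use the functional calculus: choosing a smooth cutoff $\psi$ equal to $1$ on $S$ and supported slightly beyond it, one has $f=\psi(\Delta)f$ and $\Delta^n f=m_n(\Delta)f$ with symbol $m_n(\lambda)=(-(\lambda^2+\rho^2))^n\psi(\lambda)$; the point is that the spectral multiplier $m_n(\Delta)$ should be bounded on $L^{2,\infty}$ with operator norm at most $p(n)\,(\sup_S(\lambda^2+\rho^2))^{n}$ for a fixed polynomial $p$, so that after taking $n$-th roots the polynomial loss disappears. This reduces the upper bound to a multiplier estimate on $L^{2,\infty}$, which is where the Kunze--Stein/Herz machinery and the precise asymptotics of the spherical functions and the $c$-function must enter. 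The lower bounds run in the reverse direction: if $S$ carried mass beyond $\alpha$, localising by a cutoff $\chi(\Delta)$ to that region produces a nonzero $g=\chi(\Delta)f$ with $\|\Delta^n g\|_{2,\infty}\le\|\chi(\Delta)\|_{\mathrm{op}}\|\Delta^n f\|_{2,\infty}$, while the localised piece forces growth of rate at least $(\alpha+\varepsilon)^2+\rho^2$, contradicting the value $c_1$.

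With the support identified, parts (a)--(c) are formal. Since $\sup S\ge\inf S$ we obtain $c_1=\alpha^2+\rho^2\ge\beta^2+\rho^2=1/c_2$, i.e.\ $c_1c_2\ge1$, which is (a). If $c_1c_2>1$ then $\alpha>\beta$ and $\operatorname{supp}\widetilde{f}\subseteq\mathbb A_\beta^\alpha$, while the fact that $\alpha,\beta$ are the \emph{attained} supremum and infimum of $S$ forbids any smaller annulus, giving (b). If $c_1c_2=1$ then $\alpha=\beta$ and $\operatorname{supp}\widetilde{f}\subseteq\{|\lambda|=\alpha\}\times B$; the symbol $c_1-(\lambda^2+\rho^2)$ vanishes on this set, so $\widetilde{(\Delta+c_1)f}=0$ and $\Delta f=-c_1 f$, and the characterisation of $L^{2,\infty}$ eigenfunctions of $\Delta$ then yields $f=\mathcal P_\alpha F$ with $F\in L^2(B)$, which is (c). Here one must also know that the weak-$L^2$ control forces $\widetilde{f}$ to be a measure (a distribution of order zero), so that the vanishing of the continuous symbol on the support genuinely annihilates it.

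For (d), the ball case $\mathbb A_0^\alpha$ corresponds exactly to $\beta=0$, i.e.\ $c_2=1/\rho^2$, which is permitted; collapse to the origin would force $\alpha=\beta=0$, hence $\operatorname{supp}\widetilde{f}\subseteq\{0\}\times B$ and an $L^{2,\infty}$ eigenfunction with eigenvalue $-\rho^2$. This is impossible because the bottom of the spectrum is singular: the Poisson transform $\mathcal P_0$ does not carry nonzero boundary data into $L^{2,\infty}$ (equivalently, the Plancherel density degenerates at $\lambda=0$), so no nonzero such $f$ can exist. I expect the genuine obstacle to be the $L^{2,\infty}$ local spectral radius formula of the first step, and within it the lower bound in particular: lacking Plancherel in weak $L^2$, one cannot simply integrate $(\lambda^2+\rho^2)^{2n}$ against $|\widetilde{f}|^2$, so the quantitative multiplier bounds, with controlled (at most polynomial) dependence on $n$, must be extracted by hand from the fine asymptotics of the spherical functions.
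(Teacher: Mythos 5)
Your architecture for (a), (b) and (d) is essentially the paper's: both rest on the fact that $\Delta$ acts on the Fourier side as multiplication by $-(\lambda^2+\rho^2)$, on the $L^{2,\infty}$-boundedness of convolution with $K$-biinvariant $L^2$-Schwartz functions (Proposition \ref{summary-prop} (iii)--(iv)), and on duality with $L^{2,1}$. Two remarks on part (b). First, the paper obtains the inclusion $\mathrm{Suppt}\,\wtilde f\subseteq[\beta,\alpha]\times B$ by pairing $\Delta^{\pm k}f$ against test functions $\psi_k$ with $\what{\psi_k}=(\lambda^2+\rho^2)^{\mp k}\what\phi$, which uses only the \emph{upper} bounds $\|\Delta^{\pm k}f\|_{2,\infty}\le(c_i+\epsilon)^k$ from the hypothesis; your contradiction via $g=\chi(\Delta)f$ additionally needs a \emph{lower} bound on $\|\Delta^n g\|_{2,\infty}$ for $g$ with Fourier support in $\{\lambda\ge\alpha+\epsilon\}$. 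This is obtainable (e.g.\ from $\|g\|_{2,\infty}\le\|\Delta^{-n}\chi_1(\Delta)\|_{\mathrm{op}}\|\Delta^n g\|_{2,\infty}$ with the same multiplier estimates), but you should supply it. Second, you silently work with $K$-biinvariant cutoffs; passing from biinvariant test functions to general ones requires the translation/radialization bookkeeping of Propositions \ref{support-radialization} and \ref{HFT-translation} and Observations \ref{observation-1}--\ref{observation-2}. The sharpness half of (b), via $f=f\ast\psi$ and the polynomially growing multiplier seminorm whose $n$-th root tends to $1$, is exactly the paper's argument.

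The genuine gap is in (c). You reduce it to the assertion that the weak-$L^2$ control forces $\wtilde f$ to be a distribution of order zero on the sphere $\{\alpha\}\times B$, so that the vanishing of the symbol $c_1-(\lambda^2+\rho^2)$ on the support kills $(\Delta+c_1)f$. That assertion is the entire difficulty of (c), and it is precisely what the paper spends Lemmas \ref{polynomial}--\ref{eigen-for-nonradial} proving. A distribution supported on the sphere can a priori contain transverse derivatives; after radializing translates of $f$ and transporting the problem to $\R$ by the adjoint Abel transform, the structure theorem for distributions supported at a point shows that each $R(\ell_x f)$ has the form $P(\partial_\lambda)\phi_\lambda|_{\lambda=\alpha}$ for some polynomial $P$. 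Ruling out $\deg P\ge1$ requires showing $\partial_\lambda\phi_\lambda|_{\lambda=\alpha}\notin L^{2,\infty}(X)$, which the paper extracts from the Harish-Chandra expansion of $\phi_\lambda$: the $\lambda$-derivative produces a factor of $t$ against $e^{-\rho t}$, and $te^{-\rho t}\notin L^{2,\infty}([1,\infty),e^{2\rho t}dt)$ while $e^{-\rho t}$ is --- this is exactly where the choice of $L^{2,\infty}$ does its work. Your proposal names the needed fact but gives no mechanism for proving it, so part (c) --- and with it the ``cannot collapse to the origin'' half of (d), which the paper deduces by applying (c) at $\alpha=0$ and then invoking Proposition \ref{summary-prop} (v) --- is not established as written.
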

(See Theorem \ref{towards-distribution-result} for a generalization and Proposition \ref{gen-eigen-for-nonradial} for a related result of independent interest.)

For a Schwartz class function $g$ on $X$, $\wtilde{g}$ is an analogue of the Fourier transform on $\R^n$ in polar coordinates and is known as  geometric or the Helgason Fourier transform,   defined on $\R^+\times B$. In the statement  above $\wtilde{f}$ is taken in the sense of tempered distribution. The Poisson transform $\mathcal P_\alpha$  is an analogue of the operator $P_\lambda$ given by $P_\lambda F(x)=\int_{\mathbb S^{n-1}} F(y) e^{i\lambda x.y} dy$ on $\R^n$. While  $P_\lambda$  maps a suitable function $F$ on the  {\em boundary} $\mathbb S^{n-1}$  of $\R^n$  to a  function on $\R^{n}$, $\mathcal P_\alpha$ maps a function $F$ defined on $B$ to a function on $X$. Indeed $P_\lambda F$ or $P_\alpha F$ are the basic eigenfunctions of the Laplacian of the corresponding spaces.   In the hypothesis   $\Delta^n f$ is used  in the sense of distribution while $\Delta^{-n}f$ is in the sense of multiplier as spectrum of $\Delta$ on $X$ is $[-\infty, -\rho^2]$  where $\rho$, the half-sum of positive roots, is realized as  a positive number. We keep away from  these interpretational worries, as we shall discuss them in details  in Section \ref{preparation}.  For other  notation see Section 2.

An analogue of this theorem can be proved for $\R^n$ replacing $L^{2, \infty}$-norm   by $L^\infty$-norm or by $L^p$-norm with $p>2n/(n-1)$ for $n>1$, to ensure the possibility of  accommodating  the eigenfunctions of Laplacian. One  also obtains  an analogue for the Heisenberg groups $\Hq_n$  with $L^\infty$-norm in the hypothesis, and using the ``Fourier transform''  as defined in \cite{Str-Roe}. See \cite{Gabardo1989, Howd-Reese, Str-Roe}, where some parts of these results for $\R^n$ and $\Hq_n$ are implicit. The situation in the Riemannian symmetric spaces of noncompact type, appears to be more intriguing, as indicated   in \cite{Str-Roe} by  constructing a counter example of Euclidean result for a complex hyperbolic space.  We may point out here that the choice of the weak $L^2$-norm (i.e. $L^{2, \infty}$-norm) in the hypothesis is not at all arbitrary. Indeed, among  all the Lorentz-norms (which, we recall include all $L^p$-norms), $L^{2, \infty}$-norm is the unique option for $X$  through which the theorem can accommodate   the two possibilities (b) and (c) about the function $f$.  We shall elaborate on these in  Section \ref{preparation} and cite  some other ``close to $L^2$'' norms which can be used in place of weak $L^2$-norm.
Theorem \ref{main-result} and its proof  extends to the Damek-Ricci spaces (also called  $NA$ groups) which are Riemannian manifolds and solvable Lie groups but not in general symmetric spaces. Indeed  rank one symmetric spaces of noncompact type accounts for a very small subset of all $NA$ groups.
However, we choose to  illustrate the phenomenon only  on rank one symmetric space, since, extending this to the  set up of $NA$ groups requires a lot of preliminaries, but the proof turns out to be the same (see Section 5 (3)).

To orient the readers  we shall  add some perspective of this study.
An inverse Paley-Wiener theorem gives criterion on a function (with some integrability or regularity) which is necessary and sufficient  for its Fourier transform to be compactly supported in a ball around origin, through the  holomorphic extension of the function along with a growth condition on it. For Euclidean spaces it is same as the usual Paley-Wiener theorem. But for other spaces (e.g.  a semisimple or nilpotent Lie group or a symmetric space) where it is plausible to talk about Fourier transform,  the usual and the inverse Paley-Wiener theorems are distinguished by the fact that domain of a function and its Fourier transform  may be quite different and it is not at all clear where the complex analytic extension of the function has to be considered. For non-Euclidean spaces such inverse Paley-Wiener theorems are rather recent
(see e.g. \cite{Faraut, Pasquale, Kroz-Stanton}). Very  roughly,  they state  again  that a suitable function with its Fourier transform compactly supported on its domain  can be characterized from  the holomorphic extension (in an appropriate domain) and  growth of the function.

Unlike these results a {\em real} inverse  Paley-Wiener theorem,   does not consider and use  the  holomorphic extension of the inverse Fourier transform, but gives criterion involving norm estimates on the integral powers of Laplacian acting on the function.   The main papers here are \cite{Bang1990, Bang1995, Tuan-Z, Andersen2004-1, Andersen2004-2, Andersen-DeJu2010, Bang-Huy2010}. While most of these papers deal with Euclidean spaces,  \cite{Andersen2004-1} considers the Riemannian symmetric space, where estimates on the $L^2$-norm of positive integral powers of Laplacian is used. Part (b) of Theorem \ref{main-result} is an extension of this as it characterizes functions whose Fourier transform is supported in a compact annulus around origin, under a weaker norm-condition. A different set of papers  started with  Roe \cite{Roe}  and followed  by many, including \cite{Gabardo1989, Howd-1, Howd-Reese, Str-Roe, KRS2014, RS2014}  try to characterize eigenfunctions of differential operators, in particular of the Laplacian,  from a normed-estimate of a double sequence of functions $\{f_k\}$ related by $\Delta f_k=f_{k+1}$ for $\Delta$ of the space in context. Most of these papers deal with  Euclidean spaces. One important exception is \cite{Str-Roe} where  Strichartz  establishes  the failure of the Euclidean result for   hyperbolic spaces, as mentioned above. But through \cite{KRS2014} and \cite{RS2014} the result is restored for all Riemannian symmetric spaces of noncompact type (which includes hyperbolic spaces) and is also generalized to harmonic $NA$ groups.
A careful study reveals that the common thread between these two sets of results is the use of estimates of integral powers of Laplacian applied on the function.  Our aim is to  offer a  version which accommodates both of these aspects.

We note in passing that  `the compactly supported  Fourier transform' binds the real and the usual inverse Paley-Wiener theorem together,  vindicating a relation between the estimates of $\Delta^n f$ and the  regularity of $f$.
Indeed the use of estimates of iterated action of Laplacian or more general  operators on  a function  to retrieve regularity properties of  the function  is classical. We may cite  for example  Nelson,  Kotake and Narasimhan \cite{Nelson, Kot-Nara} and the references  therein.

\noindent{Acknowledgement:} The author is thankful to  Swagato K Ray for numerous useful discussions during this work.

\section{Preliminaries} \label{prelim}
In this section we shall establish notation and collect all ingredients to explain the statement and proof of the main result.
\subsection{Generalities} \label{pri-gen}
For any $p\in [1, \infty)$, let $p'=p/(p-1)$. The letters $\N$, $\Z$, and  $\R$, $\C$  denote respectively the set of natural numbers, ring of integers, field real  and complex numbers.  We denote the nonzero real numbers, nonnegative real numbers and
nonnegative integers respectively by $\R^\times$, $\R^+$ and $\Z^+$. For
$z\in \C$,  $\Re z$, $\Im z$ and $\bar{z}$  denote respectively the real and
imaginary parts of $z$ and the complex conjugate of $z$.  For  a set $S$ in a topological space $\overline{S}$ denotes its closure and for a set $S$ in a measure space $|S|$ denotes its measure.   We
shall follow the standard practice of using the letters $C, C_1, C_2, C'$
etc. for positive constants, whose value may change from one line to another.
The constants may be suffixed to show their
dependencies on important parameters.
The notation $\langle f_1, f_2\rangle$ for two functions or distributions $f_1, f_2$, is frequently used in this article. It may  mean  $\int f_1 f_2$ when it makes sense. It may also mean  that the distribution $f_1$ is acting on $f_2$. Depending on the functions/distributions $f_1, f_2$ involved, the space could be  $X$ or its Fourier-dual $\R^+\times B$, or $\R$ with the canonical  measures on them. As this notation is widely used in the literature, we hope this will not create any confusion.
For two positive expressions $f_1$ and $f_2$, by $f_1\asymp f_2$ we mean that there are  constants $C_1, C_2>0$ such that
$C_1f_1\leq f_2\leq C_2f_1$.
\subsection{Lorentz spaces} \label{pri-Lorentz} We shall briefly introduce Lorentz
spaces  (see \cite{Graf, S-W, RS2009} for details).
Let $(M, m)$ be a $\sigma$-finite measure space, $f:M\longrightarrow \C$ be a
measurable function and $p\in [1, \infty)$, $q\in [1, \infty]$. We
define
\begin{equation*}\|f\|^*_{p,q}=\begin{cases}\left(\frac qp\int_0^\infty [f^*(t)t^{1/p}]^q\frac{dt}t\right)^{1/q}
\ \ \ \ \ \ \ \ \  \ \ \ \ \ \textup{ if } q<\infty,\\ \\ \sup_{t>0}td_f(t)^{1/p}=\sup_{t>0} t^{1/p} f^\ast(t)
 \ \ \textup{ if } q=\infty,\end{cases}\end{equation*} where
for $\alpha>0$, $d_f(\alpha)=|\{x \mid f(x)>\alpha\}|$, the distribution function of $f$ and
$f^*(t)=\inf\{s \mid d_f(s)\le t\}$, the {\em
decreasing rearrangement} of $f$.
Let $L^{p,q}(M)$ be the set of all measurable $f:M\longrightarrow \C$ such that
$\|f\|^*_{p,q}<\infty$.  We note the following.
\begin{enumerate}
\item[(i)] The space $L^{p,\infty}(M)$ is known as the weak $L^p$-space.
\item[(ii)] $L^{p,p}(M)=L^p(M)$ and $\|\cdot\|_{p,p}^\ast=\|\cdot\|_p$.
\item[(iii)] For $1<p, q<\infty$, the dual space of $L^{p,q}(M)$ is $L^{p', q'}(M)$ and the dual of $L^{p,1}(M)$ is $L^{p',\infty}(M)$.
 \item[(iv)] If $q_1\le q_2\le \infty$ then
     $L^{p, q_1}(M)\subset L^{p, q_2}(S)$ and $\|f\|^\ast_{p, q_2}\le \|f\|^\ast_{p, q_1}$.
\end{enumerate}
The Lorentz  ``norm'' $\|\,\cdot\,\|^\ast_{p,q}$ is actually a quasi-norm  and  $L^{p,q}(M)$ is a quasi Banach
space (see \cite[p. 50]{Graf}). For $1<p\le \infty$, there
is an equivalent norm $\|\,\cdot\,\|_{p,q}$ which makes it a Banach
space (see \cite[Theorems 3.21, 3.22]{S-W}). We shall slur over this difference and use the notation $\|\cdot\|_{p,q}$.

\subsection{Symmetric space} We shall mostly use standard notation for objects related to semisimple Lie groups and the
associated Riemannian symmetric spaces of noncompact type.
Along with required preliminaries this can be found for example in \cite{GV, Helga-2}. For making the article self-contained, we shall  gather  them  without elaboration.
We recall that a rank one Riemannian symmetric space  of noncompact type (which we denote by $X$ throughout  this article) can be realized as a quotient space $G/K$, where  $G$ is a connected noncompact semisimple Lie group with finite centre and of real rank one and $K$  a maximal compact subgroup of $G$. Thus $\pmb{o}=\{K\}$ is the origin of $X$ and a function on $X$ can be identified with a function on $G$ which is invariant under the right $K$-action.
 The group $G$  acts naturally on $X=G/K$ by left translations  $\ell_g: xK\to g^{-1}xK$ for $g\in G$. The Killing form on the Lie algebra $\mathfrak g$ of $G$ induces a $G$-invariant Riemannian structure and a $G$-invariant measure  on $X$. Let $\Delta$ be the corresponding Laplace-Beltrami operator. For an element $x\in X$, let $|x|= d(x, \pmb{o})$, where $d$ is the distance associated to the Riemannian structure on $X$. Let $\mathfrak k$ be  the Lie algebra of $K$, $\mathfrak g=\mathfrak k+\mathfrak p$ be the corresponding Cartan decomposition and  $\mathfrak a$ be a maximal abelian subspace of $\mathfrak p$. Then $\dim\mathfrak a=1$ as  $G$ is of real rank one. We denote the real dual of $\mathfrak a$ by  $\mathfrak a^*$. Let $\Sigma\subset \mathfrak a^*$ be the subset of nonzero roots of the pair $(\mathfrak g,\mathfrak a)$. We recall that either $\Sigma=\{-\gamma, \gamma\}$ or $\{-2\gamma, -\gamma, \gamma, 2\gamma\}$ where $\gamma$ is a positive root and the Weyl group $W$ associated  to $\Sigma$ is $\{{\rm Id}, -{\rm Id}\}$ where Id is the identity operator.
 Let $m_\gamma=\dim \mathfrak g_\gamma$ and $m_{2\gamma}=\dim \mathfrak g_{2\gamma}$ where $\mathfrak g_\gamma$ and
 $\mathfrak g_{2\gamma}$ are the root spaces corresponding to $\gamma$ and $2\gamma$. Then $\rho=\frac 12(m_\gamma+2m_{2\gamma})\gamma$ denotes the half sum of the positive roots. Let  $H_0$ be the unique element in $\mathfrak a$ such that $\gamma(H_0)=1$ and through this we identify $\mathfrak a$ with $\R$ as $t\mapsto tH_0$. Then $\mathfrak a_+= \{H\in \mathfrak a\mid \gamma(H)>0\}$ is identified with the set of positive real numbers. We  identify $\mathfrak a^*$ and its complexification $\mathfrak a^*_\C$ with $\R$  and  $\C$  by $t\mapsto t\gamma$ respectively  $z\mapsto z\gamma$, $t\in \R$, $z\in \C$. By abuse of notation we will denote $\rho(H_0)=\frac 12(m_\gamma+2m_{2\gamma})$ by $\rho$. Let $\mathfrak n= \mathfrak g_\gamma+\mathfrak g_{2\gamma}$, $N=\exp \mathfrak n$,
  $A=\exp \mathfrak a$, $A^+=\exp \mathfrak  a_+$ and $\overline{A^+}=\exp \overline{\mathfrak  a_+}$. Then  $N$ is a nilpotent Lie group and $A$ is a one dimensional vector subgroup identified   with $\R$.
 Precisely $A$ is parametrized by $a_s=\exp(sH_0)$.  The Lebesgue measure on $\R$ induces a Haar measure on $A$ by $da_s=ds$.  Let $M$ be the  centralizer of $A$ in $K$. The groups $M$ and $A$ normalizes $N$.

The group $G$ has  the Iwasawa decomposition
$G=KAN$ and the polar decomposition $G=K\overline{A^+}K$. Through polar decomposition $X$ is realized as $\overline{A^+}\times B$ where $B=K/M$ is the compact boundary of $X$.
Using the Iwasawa decomposition $G=KAN$,  we write an
element $x\in G$ uniquely as $k(x)\exp H(x)n(x)$ where $k(x)\in K, n(x)\in N$ and $H(x)\in \mathfrak a$. For $x\in X=G/K, b\in B=K/M$  let $A(x, b)=A(gK, kM)= -H(x^{-1}k)$. Let $dg$,  $dk$ and
$dm$ be the Haar measures of $G$,  $K$ and $M$ respectively
with $\int_K\,dk=1$ and $\int_M\,dm=1$. Let $db$ be the normalized measure on $K/M = B$ induced by $dk$ on  $K$. We have the following
integral formulae corresponding to the  Iwasawa decompositions $G=KAN$ and the polar decomposition, which hold for any integrable function:
\begin{equation}
\int_Gf(g)dg=C_1\int_K\int_\R\int_N f(ka_tn)e^{2\rho t}\,dn\,dt\,dk,
\label{Iwasawa-1}
\end{equation}
and
\begin{equation}
\int_Gf(g)dg=C_2\int_K\int_0^\infty\int_K f(k_1a_tk_2) (\sinh
t)^{m_\gamma}(\sinh 2t)^{m_{2\gamma}}\,dk_1\,dt\,dk_2, \label{polar}
\end{equation}
The constants  $C_1, C_2$ depend on the normalization of the Haar measures involved.
Since  $\sinh t\approx te^t/(1+t), t\ge 0$ it follows from  (\ref{polar}) that
\begin{eqnarray}
\int_G|f(g)|dg&\asymp&C_3\int_K\int_0^1\int_K |f(k_1a_tk_2)|
t^{d-1}\,dk_1\,dt \,dk_2\nonumber \\&+&C_4 \int_K\int_1^\infty\int_K
|f(k_1a_tk_2)|e^{2\rho t}\,dk_1\,dt\,dk_2 \label{polar-2}
\end{eqnarray} where $d=m_\gamma+m_{2\gamma}+1$ is the dimension of the symmetric space. For a integrable function $f$ on $X$, $\int_Gf(g)dg=\int_Xf(x)dx$ where in the left hand side $f$ is considered as a right $K$-invariant function on $G$ and $dg$ is the Haar measure on $G$, while on the right side $dx$ is the  $G$-invariant measure on $X$.
\subsubsection{Poisson transform} \label{pri-Poisson}
For $\lambda\in \C$, the complex power of the
Poisson kernel: $x\mapsto  e^{-(i\lambda+\rho) H(x^{-1})}$ is an
eigenfunction of the Laplace Beltrami operator $\Delta$ with
eigenvalue $-(\lambda^2+\rho^2)$. For any $\lambda\in \C$ and $F\in
L^1(B)$ we define the  Poisson transform $\mathcal P_\lambda$ of $F$
by (see \cite[p. 279]{Helga-2}) by
\[\mathcal P_\lambda  F(x)=\int_{B} F(b)e^{(i\lambda+\rho) A( x, b)} db\, \, \,  \text{ for  } x\in X.\] Then,
\[\Delta \mathcal P_\lambda F=-(\lambda^2+\rho^2)\mathcal P_\lambda F.\]

A function $f$ on $X$ is left $K$-invariant or {\em radial} if $f(kx)=f(x)$ for all $k\in K$ and $x\in X$. Note that a left $K$-invariant function on $X$ can be identified with a $K$-biinvariant function on $G$. We shall use both the terms radial and $K$-biinvariant for such functions.  For any function space $\mathcal L(X)$, by $\mathcal L(G//K)$ we mean its subset of $K$-biinvariant functions.  For a suitable function $f$ on $X$ we define its  {\em radialization}  $Rf$ by   $Rf(x)=\int_Kf(kx)dk$. It is clear that $Rf$ is a radial function and if $f$ is radial then $Rf=f$. We also note that for (i) $\phi, \psi\in C_c^\infty(X)$, $\langle R\phi, \psi\rangle=\langle \phi, R\psi\rangle$ and (ii) $R(\Delta \phi)=\Delta(R\phi)$. From (i) it follows that $\int_{X} f(x)dx=\int_{X} Rf(x) dx$ and hence $\|Rf\|_1\le \|f\|_1$. Interpolating \cite[p. 197]{S-W}  with the trivial $L^\infty$-boundedness of the operator $R$ we get
\[\|Rf\|_{p,q}\le \|f\|_{p,q}  \text{ for } 1<p<\infty, 1\le q\le \infty.\]

For any $\lambda\in \C$  the elementary spherical function
$\phi_\lambda$ is given by,
\[\phi_\lambda(x)=\mathcal P_\lambda 1(x)=\int_{K} e^{-(i\lambda+\rho)H(xk)}\,dk=\int_{K} e^{(i\lambda-\rho)H(xk)}\,dk \text{ for all } x\in G,\] where by $1$ we denote the constant function $1$ on $B=K/M$. Hence $\Delta \phi_{\lambda} = -(\lambda^2+\rho^2) \phi_{\lambda}$  for $\lambda\in \C$. It follows that for $\lambda\in \C$, $\phi_\lambda$ is radial,
$\phi_{\lambda} = \phi_{-\lambda}$ and it satisfies the following estimates: (see \cite{ADY}, \cite[(4.6.5)]{GV})
\begin{eqnarray}
\label{estimates-phi-lambda-classical-1}
&|\phi_{\alpha+i\gamma_p\rho}(x)|\asymp e^{-(2\rho/p')  |x|}, \  \alpha\in \R, 0<p< 2, \gamma_p=2/p-1; \nonumber\\
 &|\phi_0(a_t)|\le C  e^{\rho t} (1+|t|), \text{ for } t>0 \label{exact-estimate-1}\end{eqnarray}  and
\begin{equation}
\label{estimates-phi-lambda-classical-2}
\left|\frac{d^n}{d\lambda^n} \phi_\lambda(x)\right|\le C (1+ |x|)^n \phi_{\Im \lambda}(x) \text{ for } \lambda\in \C. \end{equation}

\subsubsection{Spherical Fourier Transform} \label{pri-spherical}
For a measurable function $f$ of $X$, we define its {\em{spherical Fourier transform}} $\widehat{f}$ and its inverse as follows (see \cite[p. 425, p. 454]{Helga-2}),  \[ \widehat{f}(\lambda) = \int_{X} f(x) \phi_{-\lambda}(x) \, dx, \quad \lambda \in \mathfrak a^{\ast},\,\,\,  f(x) = C \int_{\mathfrak a^{\ast}} \widehat{f}(\lambda) \, \phi_{\lambda}(x) \, |\hc(\lambda)|^{-2} d\lambda,\]
whenever the integrals make sense. Here  $\hc(\lambda)$ is the Harish-Chandra $\hc$-function, $d\lambda$ is the Lebesgue measure on $\mathfrak a^\ast\equiv \R$ and  $|\hc(\lambda)|^{-2} d\lambda$ is the spherical Plancherel measure on $\mathfrak a^{\ast}$ and $C$ is a normalizing constant.
Since $\phi_\lambda=\phi_{-\lambda}$ we have $\what{f}(\lambda)=\what{f}(-\lambda)$, hence  we can consider $\what{f}$ as a function on $\R^+$.

\subsubsection{Helgason Fourier Transform}\label{pri-HFT}
 For a function  $f$ on $X$,  its Helgason Fourier transform (or Fourier transform) is defined by \[\wtilde{f}(\xi, b)=\int_{X} f(x) e^{(-i\xi+\rho)(A(x, b))} dx\] for  $\xi\in \overline{\mathfrak a^\ast_+} \equiv \R^+$,  $b\in B$ for which the integral exists. (See \cite[pp. 199-203]{Helga-3} for details.) The Fourier transform $f(x)\to \wtilde{f}(\xi, b)$ extends to an isometry of $L^2(X)$ onto $L^2(\mathfrak \R^+\times B, |\hc(\xi)|^{-2} d\xi db)$  and we have,
 \[\int_{X} f_1(x) \overline{f_2(x)} dx=C \int_{\R^+\times B} \wtilde{f_1}(\xi, b)\overline{\wtilde{f_2}(\xi, b)} |\hc(\xi)|^{-2} d\xi db.\]
For functions $f, g$ on $X$ with   $g$ radial,  $\wtilde{g}(\xi, k)=\what{g}(\xi)$  and $\wtilde{f\ast g}(\xi, b)=\wtilde{f}(\xi, b)\what{g}(\xi)$ for $\xi\in \C$ and $b\in B$ whenever the quantities $f\ast g, \wtilde{f\ast g}, \wtilde{f}$ and $\what{g}$ make sense.

\subsubsection{Schwartz spaces, tempered distributions}\label{pri-schw} For $1\le p\le 2$, the $L^p$-Schwartz space $C^p(X)$ is defined (see
\cite{Ank-Sch}) as the set of $C^\infty$-functions on $X$ such that
\[\gamma_{r,D}(f)=\sup_{x\in S}|Df(x)|
\phi_0^{-2/p}(1+|x|)^r<\infty,\] for all nonnegative integers $r$
and left invariant differential operators $D$ on $X$. We topologize $C^p(X)$  by the seminorms $\gamma_{r, D}$.
Then $C^p(X)$ is  a dense subset of $L^p(X)$. Let
$C^p(G//K)$ be the set of radial functions in $C^p(X)$.  We shall primary use $C^2(X)$, the $L^2$-Schwartz space.  Let $C^2(\what{X})$ (respectively $C^2(\what{G//K})$)  be the image of $C^2(X)$  (respectively of $C^2(G//K)$) under $f\mapsto \wtilde{f}$ (respectively $f\mapsto \what{f}$). Then (see
\cite{{Ank-Sch}})   $f\mapsto \what{f}$ is a topological
isomorphism from $C^2(G//K)$ to $C^2(\what{G//K})=S(\R)_{even}$ where $S(\R)$ is  the set of Schwartz class functions on $\R$, and $S(\R)_{even}$ denotes the subspace of even functions in $S(\R)$. We do not need the explicit description of $C^2(\what{X})$, for which along with the isomorphism of $f\mapsto \wtilde{f}$ from $C^2(X)$ to $C^2(\what{X})$ we refer to \cite[Theorem 4.8.1]{egu79}.

We denote the dual space of $C^p(G//K)$ (respectively  $C^p(X)$) by $C^p(G//K)'$ (respectively  $C^p(X)'$).
 Elements of  $C^p(G//K)'$ and  $C^p(X)'$  are called respectively the $K$-bi-invariant $L^p$-tempered distributions and $L^p$-tempered distributions on $X$.
It is clear that $L^{p'}(G//K)\subset C^p(G//K)'$ and $L^{p'}(X)\subset C^p(X)'$ for $1\le p\le 2$.
For an $L^2$-tempered distribution $f$, $\wtilde{f}$ is defined as a continuous linear functional on $C^2(\what{X})$: for $\phi\in C^2(X)$, $\langle \wtilde{f}, \wtilde{\phi}\rangle=\langle f, \phi\rangle$.

For a function   $\phi\in C^2(X)$, we define support of  $\wtilde{\phi}$ as a subset of $\R^+\times  B$ by
 \[\mathrm{Suppt}\, \wtilde{\phi}=\overline{\{(\lambda, b) \in \R^+\times B  \mid \wtilde{\phi}(\lambda, b)\neq 0\}},\]
  If $\phi$  is also  $K$-biinvariant then
 $\wtilde{\phi}(\lambda, b)=\what{\phi}(\lambda)$ for all $b\in B$  and hence  $\mathrm{Suppt }\, \what{\phi}=\overline{\{\lambda \in \R^+  \mid \what{\phi}(\lambda)\neq 0\}}\times B$. When $\phi$ is $K$-biinvariant, by abuse of terminology,   the set $\overline{\{\lambda \in \R^+  \mid \what{\phi}(\lambda)\neq 0\}}$ will also be called support of $\phi$.  We recall   that  $L^{2, \infty}(X)\subset C^2(X)'$ (see Proposition \ref{summary-prop} (ii) below).
 For a function $f\in L^{2, \infty}(X)$,   the distributional support of $\wtilde{f}$ is  the complement of the largest open set $U\subset \R^+\times B$  such that for any $\phi\in C^2(X)$ with $\mathrm{Suppt}\, \wtilde{\phi}$ contained in $U$, $\langle f, \phi\rangle=0$.

If for  a function $f\in L^{2, \infty}(X)$,   $\mathrm{Suppt }\, \wtilde{f}$ is an empty set then $f\equiv 0$. Indeed, $\mathrm{Suppt }\,\wtilde{f}$ is empty implies that $f$ annihilates all functions in $C^{2}(X)$ and hence it is zero as a $L^2$-tempered distribution.

\subsubsection{Abel transform} \label{pri-Abel}
For a  radial  function $f$ on $X$ its Abel transform $\Ab f$ is defined by:
\[\Ab f(a)=e^{\rho(\log a)} \int_N f(an)dn,\text { for } a\in A,\] whenever the integral makes sense.   Through the identification of $A$ with $\R$ we can write it as:
\[\Ab f(t) = e^{\rho t} \int_N f(a_t n) dn \text{ for } t\in \R.\]   For $f\in S(\R)$ let $\mathcal F(f)(\xi)=\int_\R f(x) e^{-i\xi x} dx$ be its Euclidean Fourier transform at $\xi\in \R$.

We recall: (see \cite{Ank-Sch}) (a)  ({\em slice projection} theorem)  for any $f\in C^2(G//K)$, $\lambda\in  \R$,  $\mathcal F(\Ab f)(\lambda)=\what{f}(\lambda)$, (b)   $\Ab: C^2(G//K)\to S(\R)_{even}$ is a topological isomorphism. By duality from  the second statement we get that the adjoint of the Abel transform  $\Ab^\ast:
S(\R)'_{even} \to C^2(G//K)'$  is an  isomorphism (see \cite[p. 541]{Helga-Abel}).

\section{Some preparatory discussions} \label{preparation}
In this section we shall explain the statement of the main result, highlight some of its features and gather some results which will be used in the next section.

(1) As mentioned in the introduction,  the weak $L^2$-norm  in the hypothesis is the only possible Lorentz norm for the formulation. We shall elaborate on this.

As the statement of Theorem \ref{main-result} involves Fourier transform, tempered distribution  is a natural choice to work with. An $L^{2, \infty}$-function on $X$ is an $L^2$-tempered distribution and the space $L^{2, \infty}(X)$ is {\em close to} $L^2(X)$, where usually the inverse Paley-Wiener theorems are stated. We recall that for  $1\le q<\infty$,    $L^{2, q}(X)\subset C^2(X)'$ (see Proposition \ref{summary-prop} (ii) below), i.e. an $L^{2, q}$-function is also an $L^2$-tempered distribution.
But $L^{2, q}$-norms (which in particular includes $L^2=L^{2,2}$-norm)  discards the possibility of $f$ being an eigenfunction (see Proposition \ref{summary-prop} (vi) below). Hence in this case  $c_1c_2>1$.

 Suppose that we take $f\in L^{p, q}(X)$ with $1\le p<2, 1\le q\le\infty$ and use $L^{p, q}$-norm in the hypothesis instead of $L^{2, \infty}$-norm.  Then again $f$ is an $L^2$-tempered distribution. Indeed $C^2(X)\subset L^2(X)\cap L^\infty(X)$ and hence $C^2(X)\subset L^{p', q}(X)$ for $p, q$ in the range above  by interpolation. This implies  by duality that $L^{p, q'}(X)\subset C^2(X)'$.  But Fourier transform $\wtilde{f}(\lambda, b)$ of such a function $f$ which exists point-wise, has complex-analytic extension in $\lambda$ in a strip for almost every $b\in B$ (see \cite{MRSS, RS2009}) and so  if the limits in the hypothesis exist, the only possibilities are $c_1=\infty$ and $c_2=\rho^{-2}$, i.e.  the annulus $\mathbb A^\alpha_\beta=\C\times B$.

 Lastly if $f\in L^{p,q}(X)$ with  $p>2$, $1\le q\le \infty$, then $f$ is an $L^{p'}$-tempered distribution where $p'<2$ (and in general not an $L^2$-tempered distribution).
 See \cite[section 6]{KRS2014}. It is clear that the usual definition of distributional support of its Fourier transform  is  not meaningful for such a function since there is no function in $C^{p'}(X)$ whose Fourier transform is compactly supported.    On the other hand there are  functions  $f\in L^{p, q}(X)$  satisfying
    \[\lim_{n\to \infty} \|\Delta^n f\|_{p, q}^{1/n}=c_1,\,\, \lim_{n\to \infty} \|\Delta^{-n} f\|_{p, q}^{1/n}=1/c_1\] which are not eigenfunctions (not even generalized eigenfunctions) of $\Delta$. An easy example is the following. We take two points $\lambda_1, \lambda_2\in \C$ such that $|\Im \lambda_i|< |2/p-1|  \rho$ and $|\lambda_1^2+\rho^2|=|\lambda_2^2+\rho^2|=\delta$ for some fixed $\delta>(4\rho^2)/(pp')$. Indeed  uncountably many $\lambda\in \C$ satisfy this for any such fixed $\delta$.
    Then it is easy to verify that if $f=\phi_{\lambda_1}+\phi_{\lambda_2}$ then $f$ is not a generalized  eigenfunction but satisfies the hypothesis of Theorem \ref{main-result} with the substitution of $L^{2, \infty}$-norm by $L^{p,q}$-norm for $p, q$ as above.

(2)
 Outside the set of  Lorentz norms and $L^p$-norms there are some prominent size estimates which are used in the literature to characterize eigenfunctions of Laplacian as Poisson transforms. We shall mention only two of them. Let $B(0, r)=\{x\in X \mid |x|<r\}$ be the geodesic ball of radius $r$. For $1<p<\infty$, $1\le q< \infty$ and a function $f$ on $X$ we define
    \begin{eqnarray}
M_p(f)&=&\left(\limsup_{r\to\infty}\frac{1}{r}\int_{B(0,r)}|f(x)|^pdx\right)^{1/p},\\
\mathcal K_{p,q}(f)&=&\|\mathcal K_q(f)\|_{p, \infty}, \text{ where } \mathcal K_q(f)(x)=\left(\int_{K}|f(kx)|^qdk\right)^{1/q}.\label{npq-DR}
\end{eqnarray} Any function $f$ on $X$ satisfying $M_2(f)<\infty$ or $\mathcal K_{2,q}(f)<\infty$ is an $L^2$-tempered distribution. (See the line above Section 4).  Since  the argument in the proof of Theorem \ref{main-result}  works under the assumption  that $f$ is an $L^2$-tempered distribution,  we can substitute $L^{2, \infty}$-norm by $M_2$-norm or by $\mathcal K_{2,q}$-norm.
  See   \cite{KRS2014} for the background relevant to these norms.
\vspace{.15in}

(3) Negative powers of $\Delta$ used in the statement of Theorem \ref{main-result} can be interpreted in terms of radial multipliers. Precisely, $\Delta^{-1}$ is  an $L^p$-multiplier for $1<p<2$ (see \cite{Ank-mult}) and hence  an $L^{p'}$-multiplier. Hence by interpolation \cite[p. 197]{S-W} defines a bounded operator from $L^{2, \infty}(X)$ to itself. This is a benefit of the fact that in $X$ (and  $NA$ groups) the spectrum of $\Delta$ does not contain  $0$  (see \cite{Tay}). But keeping in mind the  spaces (e.g. $\R^n$) where  this interpretation is not valid, we can have an  alternative formulation following \cite{Gabardo1989, Str-Roe, Howd-Reese}, which in our case is only a change of notation.
\begin{theorem} \label{main-result-2}
Let $\{f_k\}_{k\in \Z}$ be a doubly infinite sequence of nonzero functions in $L^{2, \infty}(X)$ with  $\Delta f_k=f_{k+1}$ for all $k\in \Z$.  Suppose for constants $c_1 \ge \rho^2, c_2\le 1/\rho^2$,
\[\lim_{k\to \infty} \|f_k\|_{2, \infty}^{1/k}=c_1,\,\, \lim_{k\to \infty} \|f_{-k}\|_{2, \infty}^{1/k}=c_2.\]
 Then we have the conclusions of Theorem \ref{main-result} for $f=f_0$.
\end{theorem}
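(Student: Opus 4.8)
The plan is to show that the hypotheses of Theorem \ref{main-result-2} force the sequence $\{f_k\}$ to coincide with $\{\Delta^k f_0\}_{k\in\Z}$, after which the statement reduces verbatim to Theorem \ref{main-result} applied to $f=f_0$. The whole content therefore lies in identifying $f_k$ with $\Delta^k f_0$ for every $k\in\Z$, consistently for the distributionally-defined positive powers and for the multiplier-defined negative powers.

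For the nonnegative indices this is immediate: from $\Delta f_k=f_{k+1}$ and $f_0\in L^{2,\infty}(X)\subset C^2(X)'$, iterating the recurrence gives $f_k=\Delta^k f_0$ for all $k\ge 0$, where $\Delta$ acts in the sense of $L^2$-tempered distributions. For the negative indices I would invoke the observation recorded in part (3) of Section \ref{preparation}: since the spectrum of $\Delta$ on $X$ is contained in $[-\infty,-\rho^2]$ and in particular avoids $0$, the multiplier $\Delta^{-1}$ (with symbol $-(\lambda^2+\rho^2)^{-1}$) is a bounded operator on $L^{2,\infty}(X)$ and is a genuine two-sided inverse of $\Delta$ there. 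Applying $\Delta^{-1}$ to $\Delta f_{-1}=f_0$ and using $\Delta^{-1}\Delta=\mathrm{Id}$ yields $f_{-1}=\Delta^{-1}f_0$, and inductively $f_{-k}=\Delta^{-k}f_0$ for all $k\ge 0$.

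With these identifications the two limit hypotheses of Theorem \ref{main-result-2} become
\[\lim_{n\to\infty}\|\Delta^n f_0\|_{2,\infty}^{1/n}=c_1, \qquad \lim_{n\to\infty}\|\Delta^{-n}f_0\|_{2,\infty}^{1/n}=c_2,\]
which are precisely the hypotheses of Theorem \ref{main-result} with $f=f_0$; the conclusions (a)--(d) then follow directly from that theorem.

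The only point requiring genuine care — and on which the author's remark that this is ``only a change of notation'' hinges — is the negative-index step: one must be sure that the multiplier $\Delta^{-1}$ really inverts the distributional Laplacian on $L^{2,\infty}(X)$, i.e. that $\Delta$ is injective there. This is where the absence of $0$ from the spectrum is essential: if $\Delta g=0$ for $g\in L^{2,\infty}(X)$, passing to the Helgason transform gives $-(\lambda^2+\rho^2)\wtilde{g}=0$ with $-(\lambda^2+\rho^2)$ nonvanishing on $\R^+$, whence $\wtilde{g}=0$ and $g=0$. On spaces such as $\R^n$, where $0$ lies in the spectrum, this reduction fails and the sequential formulation is genuinely more general than the powers-of-$\Delta$ formulation; in the present setting the two coincide, so no additional work beyond this identification is needed.
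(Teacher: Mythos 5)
Your proposal is correct and follows essentially the same route as the paper: the paper disposes of Theorem \ref{main-result-2} with the single remark that the substitution $f_k=\Delta^k f_0$ reduces its hypothesis to that of Theorem \ref{main-result}. Your additional verification that $\Delta^{-1}$ (as the $L^{2,\infty}$-bounded multiplier of Section \ref{preparation}(3)) genuinely inverts the distributional Laplacian, so that $f_{-k}=\Delta^{-k}f_0$, is exactly the point the paper leaves implicit, and your injectivity argument via the nonvanishing of $\lambda^2+\rho^2$ is sound.
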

Indeed the substitution  $f=f_0$ and $f_k=\Delta^k f_0=\Delta^k f$ for $k\in \Z$ reduces the hypothesis of this theorem to that of  Theorem \ref{main-result}.

(4)
We recall that $\Delta^n$ for $n\in \N$ commutes with translations, precisely $\Delta^n \ell_x f= \ell_x \Delta^n f$ for any $x\in G$ and a locally integrable  function $f$ on $X$. It is also not difficult to see that
$\Delta^{-n} \ell_x f= \ell_x \Delta^{-n} f$ for any $n\in \N$. Similarly it can be verified that $\Delta^n$ for $n\in \Z$ commutes with the radialization operator $R$, i.e. $\Delta^n (R(f))=R(\Delta^n f)$.

(5) We conclude this section  collecting a few  not-so-well-known results, some of which are used in the discussion above and some will be  required  for the main argument.
\begin{proposition}\label{summary-prop}
\begin{enumerate}[{\em (i)}]
\item $C^2(X)$ is a dense subset of $L^{2,1}(X)$ and  there exists a seminorm $\nu$ of $C^2(X)$ such that for all $\phi\in C^2(X)$,
$\|\phi\|_{2,1}\le C\nu(\phi)$.
\item For  $f\in L^{2, \infty}(X)$, there exists a seminorm $\nu$ of $C^2(X)$ such that for all $\phi\in C^2(X)$,
$|\langle f, \phi\rangle|\le C\|f\|_{2, \infty} \nu(\phi)$. That is $f\in L^{2, \infty}(X)$ is an  $L^2$-tempered distribution. Since for any $q<\infty$, $L^{2, q}(X)\subset L^{2, \infty}(X)$ and $\|f\|_{2, \infty}\le \|f\|_{2, q}$,  any $f\in L^{2, q}(X)$ is also an  $L^2$-tempered distribution.
\item Let $1\le q\le \infty$ be fixed. If for a nonnegative radial measure $\mu$ on $X$, $\what{\mu}(0)<\infty$, then   $T_\mu: f\to f\ast \mu$ defines a bounded operator from $L^{2, q}$ to itself and the operator norm satisfies $\|T_\mu\|_{L^{2, q}\to L^{2, q}}\le \what{\mu}(0)$.
\item For $f\in L^{2, \infty}(X)$ and $\psi\in C^2(G//K)$, $\|f\ast \psi\|_{2, \infty}\le \|f\|_{2, \infty} \nu(\psi)$ for some seminorm $\nu$ of $C^2(X)$.
    \item If  a   nonzero function $f$ on $X$ satisfies $\Delta f=-\rho^2 f$, then $f\not\in L^{2, \infty}(X)$. In particular $\phi_0\not\in L^{2, \infty}(X)$.
    \item If  a   nonzero function $f$ on $X$ satisfies $\Delta f=-(\lambda^2+\rho^2) f$, for some $\lambda\in \R^\times$, then $f\not\in L^{2, q}(X)$ for any $q<\infty$.
    \item For any  $\lambda\in \R^\times$, $\phi_\lambda\in L^{2, \infty}(X)$.
    \item Suppose that  a  function $f$ on $X$ satisfies $\Delta f=-(\lambda^2+\rho^2) f$ with  $\lambda \in \R^\times$. Then   $f=\mathcal P_\lambda u$ for some  $u\in L^2(B)$ if and only if $f\in L^{2, \infty}(X)$ and in that case $\|\mathcal P_\lambda u\|_{2, \infty}\le C_\lambda \|u\|_{L^2(B)}$.
   \end{enumerate}
   \end{proposition}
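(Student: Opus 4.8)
The plan is to handle the eight assertions in four groups, bootstrapping the later ones from the earlier together with the spherical-function estimates recalled in Section~\ref{pri-spherical}.

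\emph{Parts (i) and (ii).} For the seminorm bound in (i) I would start from the defining estimate of $C^2(X)$: with $p=2$ every $\phi\in C^2(X)$ satisfies $|\phi(x)|\le\gamma_{r,\mathrm{Id}}(\phi)\,\phi_0(x)(1+|x|)^{-r}$. Since the Lorentz quasi-norm is solid (monotone under pointwise domination of rearrangements), it suffices to verify $g_r:=\phi_0(1+|\cdot|)^{-r}\in L^{2,1}(X)$ for large $r$ and then put $\nu=\gamma_{r,\mathrm{Id}}$, $C=\|g_r\|_{2,1}$. The membership is a radial computation: by the polar formula \eqref{polar-2} and $\phi_0(a_t)\asymp(1+t)e^{-\rho t}$, the essentially decreasing radial $g_r$ has $d_{g_r}(s)\asymp s^{-2}(\log\tfrac1s)^{2-2r}$ for small $s$, so $\int_0^\infty d_{g_r}(s)^{1/2}\,ds<\infty$ once $r>2$. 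Density of $C^2(X)$ in $L^{2,1}(X)$ is then immediate from $C_c^\infty(X)\subset C^2(X)$ and density of $C_c^\infty$ in $L^{2,1}$ (here $q=1<\infty$). Part (ii) follows by Lorentz duality: since $(L^{2,1})^\ast=L^{2,\infty}$, any $f\in L^{2,\infty}(X)$ gives $|\langle f,\phi\rangle|\le\|f\|_{2,\infty}\|\phi\|_{2,1}\le C\|f\|_{2,\infty}\nu(\phi)$.

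\emph{Parts (iii) and (iv).} Reducing to $f\ge0$ via $|f\ast\mu|\le|f|\ast\mu$, the heart of (iii) is $\|T_\mu\|_{L^2\to L^2}\le\what{\mu}(0)$. I would obtain this from the eigenrelation $\phi_\lambda\ast\mu=\what{\mu}(\lambda)\phi_\lambda$ and the pointwise inequality $|\phi_\lambda(x)|\le\phi_0(x)$ for real $\lambda$ (immediate from the integral formula, as $|e^{(i\lambda-\rho)H}|=e^{-\rho H}$), giving $\sup_{\lambda\in\R}|\what{\mu}(\lambda)|=\what{\mu}(0)$, whence the spherical Plancherel theorem yields the $L^2$ norm. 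The same relation shows $\phi_0>0$ is an eigenfunction of $T_\mu$ with top eigenvalue $\what{\mu}(0)$, so the ground-state transform $\tilde T g=\what{\mu}(0)^{-1}\phi_0^{-1}T_\mu(\phi_0 g)$ is doubly stochastic on $(X,\phi_0^2\,dx)$ (symmetry of the radial $\mu$ gives $\tilde T^\ast 1=1$), hence contracts every rearrangement-invariant norm. Transferring this contraction back to the \emph{unweighted} spaces $L^{2,q}(X,dx)$ with the sharp constant $\what{\mu}(0)$ for all $q$ is the delicate point: it is an isometry only for $q=2$, and for the endpoints $q\in\{1,\infty\}$ I would invoke the Kunze--Stein/Herz-type Lorentz estimates of \cite{KRS2014,RS2014}, filling in $1<q<\infty$ by interpolation. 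Part (iv) is then a corollary of (iii) with $d\mu=|\psi|\,dx$, since $\what{\mu}(0)=\int_X\phi_0|\psi|\,dx\le\gamma_{r,\mathrm{Id}}(\psi)\int_X\phi_0^2(1+|x|)^{-r}\,dx=:\nu(\psi)<\infty$ for large $r$.

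\emph{Parts (v)--(vii).} All three rest on one rearrangement computation. For $\lambda\in\R^\times$ the sharp envelope $|\phi_\lambda(x)|\asymp e^{-\rho|x|}$ gives, through \eqref{polar-2}, $d_{\phi_\lambda}(s)\asymp s^{-2}$ for small $s$, i.e. $\phi_\lambda^\ast(t)\asymp t^{-1/2}$; this is the borderline case, so $\phi_\lambda\in L^{2,\infty}$ (proving (vii)) while $\|\phi_\lambda\|_{2,q}^q\asymp\int\frac{dt}{t}=\infty$ for every $q<\infty$, the lower bound coming from the oscillatory asymptotics of $\phi_\lambda$. For $\lambda=0$ the extra factor in $\phi_0(a_t)\asymp(1+t)e^{-\rho t}$ inflates this to $\phi_0^\ast(t)\asymp t^{-1/2}\log t$, so that even $\|\phi_0\|_{2,\infty}=\infty$. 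To pass to arbitrary eigenfunctions I would use the mean-value property: an eigenfunction $f$ with eigenvalue $-(\lambda^2+\rho^2)$ is real-analytic, and its radialization about any point $x_0$ equals $f(x_0)$ times the copy of $\phi_\lambda$ centred at $x_0$. As radialization-about-$x_0$ averages isometric translates, $\|R_{x_0}f\|_{p,q}\le\|f\|_{p,q}$; choosing $x_0$ with $f(x_0)\ne0$ then forces the centred $\phi_\lambda$ into the same Lorentz space as $f$, producing the contradictions of (v) ($\lambda=0$, no nonzero such $f$ in $L^{2,\infty}$) and (vi) ($\lambda\in\R^\times$, none in $L^{2,q}$, $q<\infty$).

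\emph{Part (viii) and the main obstacle.} By Helgason's representation theorem every eigenfunction with regular eigenvalue $-(\lambda^2+\rho^2)$, $\lambda\in\R^\times$, is $\mathcal P_\lambda T$ for a unique boundary functional $T$, so the content of (viii) is the equivalence $f\in L^{2,\infty}(X)\Leftrightarrow T\in L^2(B)$ with the accompanying norm bound. A crude Cauchy--Schwarz in $b$ only yields $\mathcal P_\lambda u\in L^\infty$ (as $\int_B e^{2\rho A(x,b)}db=\phi_{-i\rho}(x)$ is bounded) and throws away the decisive decay $e^{-\rho t}$, which arises from cancellation in the $b$-integral; capturing it is exactly the Fourier-restriction estimate $\|\mathcal P_\lambda u\|_{2,\infty}\le C_\lambda\|u\|_{L^2(B)}$, and the necessity direction is the matching converse, recovering the $L^2(B)$ boundary datum from the leading coefficient of the asymptotic expansion of $f(a_t\,\cdot)$. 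These sharp restriction/extension theorems are the genuine analytic core, and I would import them from \cite{KRS2014,RS2014} (the phenomenon being space-sensitive, cf. the counterexample of \cite{Str-Roe}). The two steps I expect to be hardest are precisely this equivalence in (viii) and the sharp uniform-in-$q$ constant in (iii); the remaining items are either rearrangement computations or formal consequences of them.
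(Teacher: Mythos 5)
Your overall architecture is sound and, where the paper actually writes out an argument, you match it: (i) via the pointwise bound $|\phi(x)|\le\gamma_{r,\mathrm{Id}}(\phi)\,\phi_0(x)(1+|x|)^{-r}$ together with $\phi_0(1+|\cdot|)^{-r}\in L^{2,1}(X)$ for large $r$, (ii) by Lorentz duality/H\"older, and (iv) by applying (iii) to the measure $|\psi|\,dx$ and dominating $\what{|\psi|}(0)=\int_X|\psi|\phi_0$ by a Schwartz seminorm are exactly the paper's steps. Where you genuinely diverge is that the paper outsources (iii) and (v)--(viii) to the literature (\cite{Sarkar-Chaos1}, \cite{ADY}, \cite{KRS2014}, \cite{Kumar2014}), whereas you supply arguments. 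Your treatment of (v)--(vii) --- the rearrangement computations $\phi_\lambda^\ast(t)\asymp t^{-1/2}$ versus $\phi_0^\ast(t)\asymp t^{-1/2}\log t$, combined with the Harish-Chandra mean value property $\int_Kf(x_0ky)\,dk=f(x_0)\phi_\lambda(y)$ and contractivity of averaged translates to pass from $\phi_\lambda$ to arbitrary eigenfunctions --- is a correct, self-contained route and arguably more illuminating than a citation; this is in fact close in spirit to how \cite{KRS2014} proceeds. The one soft spot is (iii): the ground-state transform gives a doubly stochastic contraction on $(X,\phi_0^2\,dx)$, but the map $f\mapsto\phi_0^{-1}f$ is an isometry between the weighted and unweighted Lorentz scales only at $q=2$, and interpolating the endpoints $q=1,\infty$ does not by itself return the exact constant $\what{\mu}(0)$; so as written that paragraph proves the $L^2$ case and correctly names, but does not close, the remaining step. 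Since you then defer to the same Herz-majorization/Lorentz convolution estimates the paper itself cites for (iii), this is a difference of presentation rather than a fatal gap, but be aware that your sketch is not a proof of (iii) for $q\neq2$. For (viii) you and the paper are aligned: both reduce it to the $L^2(B)\to L^{2,\infty}(X)$ Poisson/restriction estimate of \cite{Kumar2014} and \cite{KRS2014}, which you rightly identify as the analytic core.
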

   \begin{proof}
 (i) follows from the definition of $C^2(X)$ and the fact that for an appropriately large $M$, the function $\phi_0(x) (1+|x|)^{-M}\in L^{2,1}(X)$. See \cite[Lemma 6.1.1]{KRS2014}. Denseness of $C^2(X)$ is  a consequence of denseness of $C_c^\infty(X)$ in $L^{2, 1}(X)$.
(ii) is immediate  from (i) and H\"{o}lder's inequality. See also \cite[Lemma 6.1.1]{KRS2014}.
(iii) is a particular case  of a more general result   proved in  \cite[Lemma 3.2.1]{Sarkar-Chaos1} and \cite{ADY}.
 For (iv) we have \begin{eqnarray*}\what{|\psi|}(0)=\int_{X} |\psi(x)|\phi_0(x) dx \le \sup_{x\in X} [|\psi(x)| \phi_0^{-1}(x) (1+|x|)^M]\int_{X} \phi_0^2(x) (1+|x|)^{-M} dx.
 \end{eqnarray*} It follows from the estimate of $\phi_0$ and the measure on $X$ (see Section 2) that $C=\int_{X} \phi_0^2(x) (1+|x|)^{-M} dx<\infty$ for suitably large $M$. We define
  \[\nu(\psi)=\sup_{x\in X} [|\psi(x)| \phi_0^{-1}(x) C(1+|x|)^M]\] to get  $\what{|\psi|}(0)\le \nu(\psi)$.
Thus by (iii), \[\|f\ast\psi\|_{2, \infty}\le \|\,|f|\ast|\psi|\, \|_{2, \infty}=\|T_{|\psi|} (|f|)\|_{2, \infty}\le \|f\|_{2, \infty}\what{|\psi|}(0)=\|f\|_{2, \infty}\nu(\psi).\]

For (v), (vi),  (vii) and (viii) we refer to \cite[Proposition 3.1.1, (2.2.6) and Theorem 4.3.5]{KRS2014} and \cite{Kumar2014}. ((vii) is also a particular case of (viii).)
\end{proof}
For the corresponding results in particular that of  (i), (ii) and (viii) above for $M_2$ norm and $\mathcal K_{2, q}$ norm,  we refer to \cite[Lemma 6.1.1]{KRS2014} and \cite{Bou-Sami, Ion-Pois-1}.

\section{Proof of the main result} \label{section-proof}
This section is devoted to the proof of  Theorem \ref{main-result}. We begin with a few observations and results which relate the support of the Fourier transform of a function on $X$ with the support of the Fourier transform  of its translation and radialization.
\begin{proposition}\label{support-radialization}  Let  $g\in C^2(X)$ and $\lambda\in \R^+$.  Then  $(\lambda, b)\in \mathrm{Suppt }\,\, \wtilde{g}$ for some $b\in B$ if and only if  $\lambda\in \mathrm{Suppt }\,\, \what{R(\ell_x g)}$ for some $x\in G$.
\end{proposition}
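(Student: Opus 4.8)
The plan is to reduce everything to two identities for the Helgason transform and then treat the two inclusions of supports separately.

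First I would record how $\wtilde{\cdot}$ behaves under the two operations in play. Using the cocycle identity $A(xz,b)=A(z,x^{-1}\!\cdot b)+A(x,b)$ for the Iwasawa $\mathfrak a$-projection (where $x^{-1}\!\cdot b$ is the $G$-action on $B=K/M$), the change of variable $y=xz$ in the defining integral of $\wtilde{\ell_x g}(\lambda,b)=\int_X g(x^{-1}y)e^{(-i\lambda+\rho)A(y,b)}\,dy$ gives the translation rule
\[\wtilde{\ell_x g}(\lambda,b)=e^{(-i\lambda+\rho)A(x,b)}\,\wtilde g(\lambda,x^{-1}\!\cdot b),\qquad x\in G.\]
Since the prefactor never vanishes and $b\mapsto x^{-1}\!\cdot b$ is a diffeomorphism of $B$, the projection of $\mathrm{Suppt}\,\wtilde{\ell_x g}$ to the $\R^+$-factor is independent of $x$. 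Next, self-adjointness of $R$ together with $R\phi_{-\lambda}=\phi_{-\lambda}$ and $\phi_{-\lambda}(x)=\int_B e^{(-i\lambda+\rho)A(x,b)}\,db$ yields, by Fubini, $\what{Rf}(\lambda)=\int_X f\,\phi_{-\lambda}\,dx=\int_B\wtilde f(\lambda,b)\,db$. Applying this to $f=\ell_x g$ gives
\[\what{R(\ell_x g)}(\lambda)=\int_B e^{(-i\lambda+\rho)A(x,b)}\,\wtilde g(\lambda,x^{-1}\!\cdot b)\,db=(g\ast\phi_{-\lambda})(x^{-1}).\]
Writing $\Psi_\lambda$ for the function $g\ast\phi_{-\lambda}$ on $X$, so that $\what{R(\ell_x g)}(\lambda)=\Psi_\lambda(x^{-1}K)$, I record the two facts I need: $\Psi_\lambda$ is an eigenfunction of $\Delta$ with eigenvalue $-(\lambda^2+\rho^2)$, hence real-analytic on $X$; and a standard computation (the substitution $b\mapsto x\cdot b$ with the quasi-invariance of $db$) identifies $\Psi_\lambda$ with a nonzero constant multiple of $\mathcal P_\lambda[\wtilde g(\lambda,\cdot)]$, so that by injectivity of the Poisson transform for $\lambda\in\R^\times$ one has $\Psi_\lambda\equiv 0$ if and only if $\wtilde g(\lambda,\cdot)\equiv 0$ on $B$.

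Put $S_0=\{\lambda\in\R^+:\wtilde g(\lambda,\cdot)\not\equiv 0\}$. As $\wtilde g$ is continuous, $\{\wtilde g\neq 0\}$ is open and $S_0$ is its projection, hence open; moreover $\{\lambda:(\lambda,b)\in\mathrm{Suppt}\,\wtilde g\text{ for some }b\}=\overline{S_0}$, because projection along the compact fibre $B$ is a closed map. The easy inclusion (``$\Leftarrow$'') is then: $\mathrm{Suppt}\,\what{R(\ell_x g)}\subseteq\overline{S_0}$ for every $x$. Indeed, if $\lambda_0\notin\overline{S_0}$ then $\wtilde g$ vanishes on $U\times B$ for a neighbourhood $U$ of $\lambda_0$, and the displayed integral formula forces $\what{R(\ell_x g)}\equiv 0$ on $U$.

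The substance is the reverse inclusion, where the genuine obstacle is that a union of supports over $x$ need not be closed, so one cannot argue frequency by frequency; instead I would produce a \emph{single} translation detecting all of $S_0$ at once. Choose a countable dense subset $\{\lambda_n\}\subset S_0\setminus\{0\}$. For each $n$ the eigenfunction $\Psi_{\lambda_n}$ is nonzero (by Poisson injectivity, since $\wtilde g(\lambda_n,\cdot)\not\equiv 0$) and real-analytic, so its zero set $Z_n\subset X$ is closed and nowhere dense. Since $X$ is a complete metric space, Baire's theorem gives $z_\ast\in X\setminus\bigcup_n Z_n$; pick $x_\ast\in G$ with $x_\ast^{-1}K=z_\ast$. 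Then $\what{R(\ell_{x_\ast}g)}(\lambda_n)=\Psi_{\lambda_n}(z_\ast)\neq 0$ for all $n$, so $\{\lambda_n\}\subset\{\what{R(\ell_{x_\ast}g)}\neq 0\}$ and therefore $\mathrm{Suppt}\,\what{R(\ell_{x_\ast}g)}\supseteq\overline{\{\lambda_n\}}=\overline{S_0}$. This one $x_\ast$ realises every $\lambda\in\overline{S_0}$ in a support, proving ``$\Rightarrow$'' (and, with the easy inclusion, even $\mathrm{Suppt}\,\what{R(\ell_{x_\ast}g)}=\overline{S_0}$). The two delicate points are the injectivity of the Poisson transform at real $\lambda$ and the passage from the pointwise equivalence to a single translation valid on a dense set of frequencies, which is exactly what the analyticity-plus-Baire step supplies; the value $\lambda=0$ needs no separate treatment, as it is always a limit of the $\lambda_n$ whenever $0\in\overline{S_0}$.
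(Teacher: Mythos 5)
Your proof is correct, and its backbone is the same as the paper's: both rest on the identity $\what{R(\ell_x g)}(\lambda)=(g\ast\phi_\lambda)(x^{-1})=\mathcal P_\lambda[\wtilde{g}(\lambda,\cdot)](x^{-1})$ together with the injectivity of the Poisson transform at real $\lambda$ (the paper invokes Helgason's simplicity criterion for exactly this). Where you genuinely diverge is in handling the closure built into the definition of support. The paper argues frequency by frequency: if $\lambda$ lies outside $\mathrm{Suppt}\,\what{R(\ell_x g)}$ for every $x$, then $\mathcal P_\lambda\wtilde{g}(\lambda,\cdot)\equiv 0$, hence $\wtilde{g}(\lambda,\cdot)\equiv 0$; this settles the equivalence on the open set $S_0=\{\lambda:\wtilde{g}(\lambda,\cdot)\not\equiv 0\}$ and leaves the boundary points of $\overline{S_0}$ implicit. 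You isolate precisely this obstacle (a union over $x$ of closed supports need not be closed) and dispose of it by picking a countable dense subset of $S_0\setminus\{0\}$, noting that each $g\ast\phi_{\lambda_n}$ is a nonzero real-analytic eigenfunction of $\Delta$ with nowhere dense zero set, and applying Baire to get a single $x_\ast$ with $\mathrm{Suppt}\,\what{R(\ell_{x_\ast}g)}=\overline{S_0}$. What this buys is a sharper and fully watertight statement -- one translate detects the entire projected support, including its boundary frequencies -- at the cost of the extra analyticity-plus-Baire step; the paper's terser pointwise argument is enough for the way the proposition is actually deployed later (Observation \ref{observation-2} uses only the easy inclusion $\mathrm{Suppt}\,\what{R(\ell_x g)}\subseteq\overline{S_0}$). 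Your separate treatment of $\lambda=0$ via density is also a sensible precaution that the paper does not bother with.
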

\begin{proof}
Note that for $\lambda\in \R$ (see \cite[p. 200]{Helga-3}),
\[\what{R(\ell_x g)}(\lambda)=\what{\ell_x g}(\lambda)=g\ast \phi_\lambda(x^{-1})=\int_{B} \wtilde{g}(\lambda, b)\, e^{(i\lambda+\rho)(A(x^{-1}, b))}\, db=\mathcal P_\lambda\, \wtilde{g}(\lambda, \cdot)(x^{-1}),\]
where in the last equality above we have considered $\wtilde{g}(\lambda, \cdot)$ as a function on $B$.
If $(\lambda, b)\not\in \mathrm{ support }\,\, \wtilde{g}$ for all $b\in B$ then clearly $\lambda\not\in \mathrm{ support }\,\, \what{R(\ell_x g)}$ for all $x\in G$. Conversely, if $\lambda\not\in \mathrm{ support }\,\, \what{R(\ell_x g)}$ for all $x\in G$, then $\mathcal P_\lambda \wtilde{g}(\lambda, \cdot)\equiv 0$. Using simplicity criterion (\cite[pp. 152, 165]{Helga-3}) this implies that $\wtilde{g}(\lambda, \cdot)\equiv 0$.
\end{proof}

\begin{proposition}
\label{HFT-translation}
Let $g\in C^2(X)$. If support of $\wtilde{g}$ intersects the sphere $\{\gamma\}\times B$ for some $\gamma\ge 0$,  then for any $y\in G$, support of  $\wtilde{\ell_y g}$ also intersects $\{\gamma\}\times B$.
\end{proposition}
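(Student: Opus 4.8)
The plan is to obtain the statement immediately from the preceding Proposition \ref{support-radialization}, which already translates the condition ``$\mathrm{Suppt}\,\wtilde{h}$ meets the sphere $\{\gamma\}\times B$'' into the purely spherical statement that $\gamma$ lies in the spherical-transform support of the radialization $R(\ell_x h)$ of \emph{some} translate of $h$. The one extra ingredient needed is the elementary bookkeeping fact that, as $x$ ranges over $G$ with $y\in G$ fixed, the composite $\ell_x\circ\ell_y$ runs through all left translations: it is again a left translation $\ell_{w(x,y)}$ by an element of $G$, and for fixed $y$ the assignment $x\mapsto w(x,y)$ is a bijection of $G$ (this is just the group law for the action $\ell_g\colon xK\mapsto g^{-1}xK$ of Section \ref{prelim}).

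First I would record that $C^2(X)$ is invariant under each left translation $\ell_y$, so that Proposition \ref{support-radialization} may legitimately be applied both to $\ell_y g$ and to each further translate $\ell_x(\ell_y g)$. This is standard and follows from the submultiplicativity of $1+|\cdot|$ under the group product together with the comparison of $\phi_0$ at translated points provided by the estimates on the spherical functions collected in Section \ref{prelim}.

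For the main step I would fix $h\in C^2(X)$ and set $S(h)=\{\gamma\ge 0 : (\{\gamma\}\times B)\cap \mathrm{Suppt}\,\wtilde{h}\neq\emptyset\}$. Proposition \ref{support-radialization} says precisely that $S(h)=\bigcup_{x\in G}\mathrm{Suppt}\,\what{R(\ell_x h)}$. Taking $h=\ell_y g$ and re-indexing the union through the bijection $x\mapsto w(x,y)$ then gives
\[
S(\ell_y g)=\bigcup_{x\in G}\mathrm{Suppt}\,\what{R(\ell_x(\ell_y g))}=\bigcup_{z\in G}\mathrm{Suppt}\,\what{R(\ell_z g)}=S(g).
\]
Since the hypothesis is exactly $\gamma\in S(g)$, we conclude $\gamma\in S(\ell_y g)$, which is the asserted statement; in fact the argument yields the sharper equivalence that $\mathrm{Suppt}\,\wtilde{\ell_y g}$ meets $\{\gamma\}\times B$ if and only if $\mathrm{Suppt}\,\wtilde{g}$ does.

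Because the whole argument is bookkeeping on top of Proposition \ref{support-radialization}, I do not anticipate a serious obstacle; the only points demanding care are the invariance of $C^2(X)$ under translation and the correct direction of the composition law, so that the re-indexing of the union over $G$ is valid. A more computational alternative would invoke the explicit translation formula $\wtilde{\ell_y g}(\xi,b)=e^{(-i\xi+\rho)A(y^{-1},b)}\,\wtilde{g}(\xi,\sigma_y(b))$, where $\sigma_y$ is the induced diffeomorphism of $B$: since the exponential factor never vanishes and $\sigma_y$ is a diffeomorphism, each slice $\{\gamma\}\times B$ is preserved. This route works equally well but requires the cocycle identity for $A$ and the boundary $K$-action, which the radialization argument sidesteps.
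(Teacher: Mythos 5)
Your argument is correct, but it reaches the conclusion by a different route than the paper. The paper proves Proposition \ref{HFT-translation} directly and computationally: a change of variables together with the cocycle identity $H(z^{-1}y^{-1}k)=H(y^{-1}k)+H(z^{-1}K(y^{-1}k))$ gives the explicit transformation law $\wtilde{\ell_y g}(\xi,kM)=e^{(i\xi-\rho)H(y^{-1}k)}\,\wtilde{g}(\xi,K(y^{-1}k))$, and since the exponential factor never vanishes and $k_1M\mapsto K(yk_1)M$ is a bijection of $B$, each sphere $\{\gamma\}\times B$ meets the support of $\wtilde{\ell_y g}$ exactly when it meets that of $\wtilde{g}$ --- this is precisely the ``computational alternative'' you sketch at the end. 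Your main route instead derives the statement formally from Proposition \ref{support-radialization} via the group law $\ell_x\circ\ell_y=\ell_{xy}$ and the resulting re-indexing of the union $\bigcup_x \mathrm{Suppt}\,\what{R(\ell_x h)}$; the bookkeeping is sound (including the needed translation-invariance of $C^2(X)$), and it yields the two-sided equivalence $S(\ell_y g)=S(g)$ at no extra cost. The trade-off is that your route imports the full strength of Proposition \ref{support-radialization}, whose proof rests on the injectivity (simplicity criterion) of the Poisson transform $\mathcal P_\lambda$ --- a deeper input than the elementary cocycle identity --- whereas the paper's computation is self-contained and moreover exhibits how the support itself transforms (by a nonvanishing factor and a boundary diffeomorphism), not merely its radial projection. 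Since Proposition \ref{support-radialization} precedes and is proved independently of this one, there is no circularity in your use of it.
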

\begin{proof}
We have
\[\wtilde{\ell_yg}(\xi, kM)=\int_X g(y^{-1}x) e^{(i\xi-\rho)H(x^{-1}k)} dx.\]
With the substitution  $y^{-1}x=z$ and using
the identity  $H(z^{-1}y^{-1}k)=H(y^{-1}k)+H(z^{-1} K(y^{-1}k))$ (\cite[p.200]{Helga-3}) we get from above
\begin{eqnarray*}\wtilde{\ell_y g}(\xi, kM)&=[e^{(i\xi-\rho)H(y^{-1}k)}]\ \ \int_X g(z) e^{(i\xi-\rho)H(z^{-1}K(y^{-1}k))} dz\\
&=[e^{(i\xi-\rho)H(y^{-1}k)}]\ \ \wtilde{g}(\xi, K(y^{-1}k)).\end{eqnarray*}
Suppose that $\wtilde{g}(\gamma, b)\neq 0$ for $b=k_1M$. Let $K(yk_1)=k$. Then  $K(y^{-1}k)=k_1$ and hence  $\wtilde{\ell_y g}(\gamma, kM)\neq 0$, which proves the assertion.
\end{proof}

We note that for Theorem \ref{main-result}, it is  required  to find  only  the inner and outer radii of the support of $\wtilde{f}$. Precisely, outer and inner  radii of support of $\wtilde{f}$ are $\alpha$ and $\beta$ respectively if support of $\wtilde{f}$ is contained in the annulus $[\beta, \alpha]\times B$ but not contained in $[\beta', \alpha']\times B$ when $\beta<\beta'$ or  $\alpha'<\alpha$.
\begin{observation} \label{observation-1}
Let $f\in L^{2, \infty}(X)$. Then the radii of support of $\wtilde{f}$ are the  same as the radii of support of $\ell_x f$ for any $x\in G$. Suppose that radii of support of $\wtilde{f}$ are $\alpha, \beta$.   We take a function $g\in C^2(G/K)$, such that $\mathrm{Suppt}\,\, \wtilde{g}$ is contained in $\{(\lambda, b)\in \R^+\times B \mid \lambda>\alpha\}$. Then by Proposition \ref{HFT-translation}, $\mathrm{Suppt}\,\, \wtilde{\ell_{x^{-1}} g}$ for any $x\in G$, is also contained in $\{(\lambda, b)\in \R^+\times B \mid \lambda>\alpha\}$. Hence $\langle f, \ell_{x^{-1}}g\rangle=0$. Therefore $\langle \ell_x f, g\rangle=\langle f, \ell_{x^{-1}}g\rangle=0$. Since $f$ is a translation of $\ell_x f$, outer radius of support of $\wtilde{\ell_xf}$ is same with outer radius of support of $\wtilde{f}$. Similarly we can show that inner radius of $\wtilde{f}$ and of $\wtilde{\ell_xf}$ are same.
\end{observation}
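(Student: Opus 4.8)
The plan is to fix $x\in G$, write $\alpha$ and $\beta$ for the outer and inner radii of $\mathrm{Suppt}\,\wtilde{f}$, and prove that $\wtilde{\ell_x f}$ has exactly these same two radii. Two ingredients carry the argument. The first is the translation duality $\langle \ell_x f, g\rangle=\langle f,\ell_{x^{-1}}g\rangle$ for $g\in C^2(X)$: since the measure on $X$ is $G$-invariant, the substitution $x^{-1}z\mapsto w$ in $\int_X f(x^{-1}z)g(z)\,dz$ yields $\int_X f(w)g(xw)\,dw$, and this is legitimate at the level of tempered distributions because $f\in L^{2,\infty}(X)$ is an $L^2$-tempered distribution (Proposition \ref{summary-prop}(ii)) and $\ell_{x^{-1}}g$ again lies in the translation-invariant space $C^2(X)$. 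The second is Proposition \ref{HFT-translation}, which guarantees that a left translation does not move Fourier support across spheres.

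First I would sharpen Proposition \ref{HFT-translation} into a two-sided statement: applying it to $g$ with parameter $y$, and also to $\ell_y g$ with parameter $y^{-1}$ (using $\ell_{y^{-1}}\ell_y g=g$), shows that $\mathrm{Suppt}\,\wtilde{g}$ and $\mathrm{Suppt}\,\wtilde{\ell_y g}$ meet exactly the same family of spheres $\{\gamma\}\times B$. Consequently, if $\mathrm{Suppt}\,\wtilde{g}\subset\{(\lambda,b):\lambda>\alpha\}$ then $\mathrm{Suppt}\,\wtilde{\ell_y g}\subset\{(\lambda,b):\lambda>\alpha\}$ as well, and the analogous implication holds for the inner region $\{\lambda<\beta\}$.

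For the outer radius I would then take an arbitrary $g\in C^2(X)$ with $\mathrm{Suppt}\,\wtilde{g}\subset\{\lambda>\alpha\}$. The sharpened proposition gives $\mathrm{Suppt}\,\wtilde{\ell_{x^{-1}}g}\subset\{\lambda>\alpha\}$, and since $\alpha$ is the outer radius of $\mathrm{Suppt}\,\wtilde{f}$ the distribution $f$ annihilates every test function supported beyond $\alpha$; hence $\langle f,\ell_{x^{-1}}g\rangle=0$, and the translation duality gives $\langle \ell_x f, g\rangle=0$. Letting $g$ vary shows the outer radius of $\wtilde{\ell_x f}$ is at most $\alpha$. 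Running the same computation against test functions supported in $\{\lambda<\beta\}$ shows the inner radius of $\wtilde{\ell_x f}$ is at least $\beta$.

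To finish I would invoke symmetry. Because $f=\ell_{x^{-1}}(\ell_x f)$, substituting $\ell_x f$ for $f$ and $x^{-1}$ for $x$ in the previous step reverses both inequalities, so the outer radii agree and the inner radii agree. The one nontrivial step is the passage from Proposition \ref{HFT-translation} to its two-sided form together with the recognition that, for fixed $\gamma$, the translation $\ell_y$ merely relabels the boundary variable $b$ over the sphere $\{\gamma\}\times B$ (through $b\mapsto K(y^{-1}k)$) while preserving whether the support meets that sphere; everything else here is a routine dualization of that geometric fact to the weak-$L^2$ distribution $f$.
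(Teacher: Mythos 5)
Your proposal is correct and follows essentially the same route as the paper: translation duality $\langle \ell_x f, g\rangle=\langle f,\ell_{x^{-1}}g\rangle$ combined with Proposition \ref{HFT-translation} to show translated test functions keep their Fourier support away from $[\,0,\alpha\,]$ (resp.\ $[\beta,\infty)$), and then the symmetry $f=\ell_{x^{-1}}(\ell_x f)$ to reverse the inequalities. Your explicit upgrading of Proposition \ref{HFT-translation} to a two-sided statement (by applying it to $\ell_y g$ with parameter $y^{-1}$) is a small clarification that the paper leaves implicit but uses in exactly the same way.
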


\begin{observation} \label{observation-2}
 Let $f\in L^{2, \infty}(X)$. Suppose that $\mathrm{Suppt}\,\, \wtilde{f}\subset \{\alpha\}\times B$. Then $\mathrm{Suppt}\,\, \what{R(\ell_x f)}\subset \{\alpha\}$ for any $x\in G$. Indeed if $R(\ell_x f)=0$ we have nothing to show. So we assume  $R(\ell_x f)\neq 0$. We take a function $g\in C^2(X)$ with $\mathrm{Suppt}\,\, \wtilde{g}\subset \{(\lambda, b)\in \R^+\times B \mid \lambda\neq \alpha\}$. By Proposition \ref{support-radialization}, $\mathrm{Suppt}\,\, \what{R(g)}\subset \{\lambda\in \R^+\mid \lambda\neq \alpha\}\times B$. Threfore by Observation \ref{observation-1}, $\langle \ell_x f, Rg\rangle =0$ and hence $\langle R(\ell_x f), g\rangle=\langle \ell_x f, Rg\rangle =0$.
\end{observation}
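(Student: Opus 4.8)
The plan is to unwind the definition of distributional support recalled in Subsection~\ref{pri-schw} and reduce the claim to the vanishing of a single pairing. To establish $\mathrm{Suppt}\,\what{R(\ell_x f)}\subset\{\alpha\}$ it suffices to show, for every test function $g\in C^2(X)$ whose Helgason transform is supported in the open set $\{(\lambda,b)\in\R^+\times B\mid \lambda\neq\alpha\}$, that $\langle R(\ell_x f), g\rangle=0$; the case $R(\ell_x f)=0$ gives empty support, hence is trivially contained in $\{\alpha\}$, so I assume $R(\ell_x f)\neq 0$. The first move is to shift the radialization off the distribution and onto the test function via the adjoint relation $\langle R(\ell_x f), g\rangle=\langle \ell_x f, Rg\rangle$ from Subsection~\ref{pri-Poisson} (extended from $C_c^\infty$ to the distributional pairing), which recasts the goal as showing that $\ell_x f$ annihilates the radial function $Rg$.

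The next step is to locate the support of $\wtilde{Rg}$. Since $Rg$ is radial, $\wtilde{Rg}(\lambda,b)=\what{Rg}(\lambda)$, so it is enough to find $\mathrm{Suppt}\,\what{Rg}$ on $\R^+$. Here I would invoke Proposition~\ref{support-radialization}: the hypothesis that $(\alpha,b)\notin\mathrm{Suppt}\,\wtilde{g}$ for all $b\in B$ forces $\alpha\notin\mathrm{Suppt}\,\what{R(\ell_y g)}$ for every $y\in G$, and in particular (taking $y=e$) $\alpha\notin\mathrm{Suppt}\,\what{Rg}$. Consequently $\mathrm{Suppt}\,\wtilde{Rg}\subset\{(\lambda,b)\in\R^+\times B\mid \lambda\neq\alpha\}$.

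Finally I would use Observation~\ref{observation-1} to control the support of $\wtilde{\ell_x f}$. Because $\mathrm{Suppt}\,\wtilde{f}\subset\{\alpha\}\times B$, both the inner and outer radii of $\wtilde{f}$ equal $\alpha$; Observation~\ref{observation-1} transports these radii to the translate, so $\mathrm{Suppt}\,\wtilde{\ell_x f}\subset\{\alpha\}\times B$ as well. The supports of $\wtilde{\ell_x f}$ and $\wtilde{Rg}$ are then disjoint---one sits on $\{\alpha\}\times B$, the other avoids it---so the open set $\{(\lambda,b)\mid\lambda\neq\alpha\}$ lies in the complement of $\mathrm{Suppt}\,\wtilde{\ell_x f}$, and the defining property of distributional support yields $\langle \ell_x f, Rg\rangle=0$. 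Tracing this back through the adjoint identity gives $\langle R(\ell_x f), g\rangle=0$, which is exactly what is needed.

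The point requiring the most care, and the real content of the argument, is the passage between two a priori different notions of support: that of the full Helgason transform $\wtilde{\,\cdot\,}$ as a distribution on $\R^+\times B$, and that of the spherical transform $\what{\,\cdot\,}$ of the radialization as a distribution on $\R^+$. Proposition~\ref{support-radialization} (through the identity $\what{R(\ell_x g)}(\lambda)=\mathcal P_\lambda\wtilde{g}(\lambda,\cdot)(x^{-1})$ and the simplicity of $\mathcal P_\lambda$) together with Observation~\ref{observation-1} are precisely the bridges that make this passage legitimate; the remaining manipulations---the formal adjointness of $R$ and the density of $C^2(X)$ underlying the support definition---are routine.
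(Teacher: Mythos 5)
Your argument is correct and follows the paper's own proof essentially step for step: the trivial case $R(\ell_x f)=0$, the adjoint identity $\langle R(\ell_x f),g\rangle=\langle \ell_x f,Rg\rangle$, Proposition \ref{support-radialization} to place $\mathrm{Suppt}\,\what{Rg}$ away from $\alpha$, and Observation \ref{observation-1} to transfer the support of $\wtilde{f}$ to $\wtilde{\ell_x f}$ and conclude that the pairing vanishes. No substantive differences to report.
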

This makes us ready to  present the proof of the main result.
 For readers' convenience distinguished parts of the proof of (c) are separated as a series of lemmas, given after the proof of this theorem. Lemma \ref{eigen-for-nonradial} (and its generalization Proposition \ref{gen-eigen-for-nonradial} in the next section) may be of independent interest.
\begin{proof}[Proof of Theorem \ref{main-result}]
We shall prove (b) and (c) and then use them to prove (a) and (d).

(b) We take $\lambda_1, \lambda_2\in \R^+$  such that $\alpha<\lambda_1<\lambda_2$. Let $\phi\in C^2(\what{G//K})$ be supported on $[\lambda_1, \lambda_2]$. We claim  that $\langle \wtilde{f}, \phi\rangle=0$.

Let $\epsilon= \frac 14(\lambda_1^2-\alpha^2)>0$ where $\alpha=\sqrt{c_1-\rho^2}$. From the hypothesis we know that there exists $N\in \N$, such that for all $n\ge N$,
\begin{equation}|\,\,\, \|\Delta^nf\|_{2, \infty}^{1/n}-c_1|<\epsilon \text{ and hence } (c_1-\epsilon)^n < \|\Delta^nf\|_{2, \infty}< (c_1+\epsilon)^n.
\label{estimate-hypo}
\end{equation}
As $\wtilde{\Delta^kf}=(-1)^k(\lambda^2+\rho^2)^k \wtilde{f}$ (where $\lambda$ is a dummy variable),
\begin{eqnarray*}
|\langle\wtilde{f}, \phi\rangle|&=|\langle\wtilde{\Delta^k f}, \frac 1{(\lambda^2+\rho^2)^k}\phi\rangle|\\
&=|\langle \Delta^k f, \psi_k\rangle|\\
&\le \|\Delta^k f\|_{2, \infty}\|\psi_k\|_{2,1}\\
&\le \|\Delta^k f\|_{2, \infty} \nu(\psi_k)\\
&\le \|\Delta^k f\|_{2, \infty} \mu(\what{\psi_k})
\end{eqnarray*}
where $\psi_k\in C^2(G//K)$ is the inverse spherical transform  of $(\lambda^2+\rho^2)^{-k}\phi\in C^2(\what{G//K})$ and $\nu, \mu$ are seminorms of $C^2(X)$ and of $C^2(\what{X})$ respectively.  We have used above H\"{o}lder's inequality, that $\|\psi_k\|_{2,1}\le \nu(\psi_k)$ (Proposition \ref{summary-prop} (i)) and the isomorphism between  $C^2(G//K)$ and $C^2(\what{G//K})$ (see subsection \ref{pri-schw}).

Thus  for $k\ge N$, we have
\begin{equation} \label{point1}
|\langle\wtilde{f}, \phi\rangle| \le (c_1+\epsilon)^k \mu(\what{\psi_k})= \mu \left[\left(\frac{\alpha^2+\rho^2+\epsilon}{\lambda^2+\rho^2}\right)^k \phi\right].
\end{equation}
Recall that $\phi$ is supported on $[\lambda_1, \lambda_2]$. For $\lambda\in [\lambda_1, \lambda_2]$ and the $\epsilon$ chosen above,
\[\lambda^2+\rho^2\ge \lambda_1^2+\rho^2=\alpha^2+\rho^2+4\epsilon>\alpha^2+\rho^2+\epsilon.\]
Hence given any $\delta>0$ we can find $N_1\in \N$ with $N_1\ge N$ such that for $k\ge N_1$,  $\mu[\ldots]<\delta$ in \eqref{point1} and hence $|\langle\wtilde{f}, \phi\rangle| <\delta$.
This establishes  the claim and proves that $f$ annihilates any function $\phi\in C^2(G//K)$ such that $\what{\phi}$ is supported in a compact set of $\R^+$ outside $[0, \alpha]$.

A step by step adaptation of this  argument will show that $f$ also annihilates any function $\psi\in C^2(G//K)$ such that $\what{\psi}$ is supported in a compact set of $\R^+$ outside  $[\beta, \infty)$. We include a sketch.
We take $\xi_1, \xi_2$ with $0<\xi_1<\xi_2<\beta$. Let $\phi\in C^2(\what{G//K})$ be supported on $[\xi_1, \xi_2]$. We need to show that $\langle \wtilde{f}, \phi\rangle=0$.
We take
\begin{equation}\label{epsilon-2}
\epsilon= \frac{\beta^2-\xi_2^2}{4(\xi_2^2+\rho^2)(\beta^2+\rho^2)}>0.
\end{equation}
 It follows from the hypothesis that there exists $N\in \N$, such that for all $n\ge N$,
\begin{equation}|\,\,\, \|\Delta^{-n}f\|_{2, \infty}^{1/n}-c_2|<\epsilon \text{ and hence } (c_2-\epsilon)^n < \|\Delta^{-n}f\|_{2, \infty}< (c_2+\epsilon)^n.
\label{estimate-hypo-2}
\end{equation}
Following steps of the previous part of the proof we get
\begin{eqnarray*}
|\langle\wtilde{f}, \phi\rangle|&=|\langle\wtilde{\Delta^{-k} f}, (\lambda^2+\rho^2)^k\phi\rangle| \le \|\Delta^{-k} f\|_{2, \infty} \mu(\what{\psi_k})
\end{eqnarray*}
where $\psi_k\in C^2(G//K)$ is the inverse image of $(\lambda^2+\rho^2)^{k}\phi\in C^2(\what{G//K})$ and $\mu$ is a  seminorm of $C^2(\what{X})$.
Taking  $k\ge N$, we have \[|\langle\wtilde{f}, \phi\rangle| \le (c_2+\epsilon)^k \mu(\what{\psi_k})
 =\mu \left[\left(\frac 1{\beta^2+\rho^2}+\epsilon\right)^k(\lambda^2+\rho^2)^k \phi\right].\]
Since  $\phi$ is supported on $[\xi_1, \xi_2]$, by \eqref{epsilon-2} we have for $\lambda\in [\xi_1, \xi_2]$,
\[4\epsilon+\frac 1{\beta^2+\rho^2}=\frac 1{\xi_2^2+\rho^2} \le \frac 1{\lambda^2+\rho^2}.\] The rest of the argument is same as the first part.

We have shown that $f$ annihilates any function $\psi\in C^2(G//K)$ with $\what{\psi}$  compactly supported outside $[\beta, \alpha]$. We shall now remove the condition of $K$-biinvariantness from $\phi$.
By Observation \ref{observation-1},
for any $x\in G$, $\ell_xf$ also annihilates all  $\psi\in C^2(G//K)$ for which $\what{\psi}$ is compactly supported outside $[\beta, \alpha]$. Since $\psi(x)=\psi(x^{-1})$, this implies that $f\ast \psi(x)=0$ for all $x\in G$. Noting that $f\ast \psi\in L^{2, \infty}(X)$ (Proposition \ref{summary-prop} (iv)) we have for any $g\in C^2(X)$, $\langle f\ast \psi, g\rangle=0$ and hence by Fubini's theorem $\langle f, g\ast \psi\rangle=0$.

We take $g\in C^2(X)$ with $\mathrm{Suppt }\,\,\wtilde{g}$  contained in an open set $U\subset \R^+\times B$  such that $([\beta, \alpha]\times B)\cap U=\emptyset$. We find another  open set $U_1\subset \R^+\times B$ satisfying  $U\subset U_1$, $U_1$ is $B$-invariant (i.e. if $(\lambda, b)\in U_1$ for some $b\in B$, then $\{\lambda\}\times B\subset U_1$)  and $([\beta, \alpha]\times B)\cap U_1=\emptyset$. We take a $\psi\in C^2(G//K)$ such that $\what{\psi}$ is supported on $U_1$ and $\what{\psi}\equiv 1$ on $U$ (hence on the set $\{\lambda\mid (\lambda, b)\in U \text{ for some } b\in B\}\times B$). Then $g\ast \psi=g$ since $\wtilde{g\ast \psi}(\lambda, k)=\wtilde{g}(\lambda, k) \what{\psi}(\lambda)=\wtilde{g}(\lambda, k)$. Thus by the argument above, $\langle f, g\rangle=\langle f,  g\ast \psi\rangle=0$.

Thus it follows that $\wtilde{f}$ is supported on a subset of  $[\beta, \alpha]\times B$.
We shall now show that it is not supported in a smaller annulus.
We define
\[R^+_f=\sup \{\lambda^2+\rho^2 \mid (\lambda, b) \in \mathrm{Suppt } \wtilde{f}\},\, R^-_f= \inf \{\lambda^2+\rho^2 \mid (\lambda, b) \in \mathrm{Suppt } \wtilde{f}\}.\] Above we have proved that
$c_1\ge R^+_f$ and $1/c_2\le R^-_f$.
Now we shall show that given any $\epsilon>0$, $c_1<R^+f+\epsilon$ and $1/c_2>R^-_f-\epsilon$. For this we fix an $\epsilon>0$.
We take a $\psi\in C^2(G//K)$ such that $\what{\psi}$ is compactly supported,  $\what{\psi}\equiv 1$ on the support of $\wtilde{f}$ (hence $\mathrm{Suppt}\, \wtilde{f}\subseteq \mathrm{Suppt}\, \what{\psi}$) and $R^+_f<R^+_\psi<R^+_f+\epsilon, R^-_f-\epsilon<R^-_\psi<R^-_f$. Then $\what{\psi}\wtilde{f}=\wtilde{f}$ and hence $f=f\ast \psi$.
Thus by Proposition \ref{summary-prop} (iv) and isomorphism of $C^2(G//K)$ and $C^2(\what{G//K})$, there exist  seminorms $\nu$ of $C^2(X)$ and $\mu$ of $C^2(\what{X})$ such that \[\|\Delta^n f\|_{2, \infty}= \|\Delta^n f\ast \psi\|_{2, \infty}=\|f\ast \Delta^n \psi\|_{2, \infty}\le \|f\|_{2, \infty}\, \nu(\Delta^n\psi) \le \|f\|_{2, \infty} \mu(\what{\Delta^n \psi}).\]
Thus,
\begin{eqnarray*}
\|\Delta^n f\|_{2, \infty}\le & \|f\|_{2, \infty} \mu((\lambda^2+\rho^2)^n\what{\psi})\le \|f\|_{2, \infty} (R^+_\psi)^n n! C_{\psi, \mu}
\end{eqnarray*} for some finite constant $C_{\psi, \mu}$ which depends on $\psi$ and $\mu$.
This implies \[c_1=\lim_{n\to \infty} \|\Delta^n f\|^{1/n}\le R^+_\psi< R^+_f+\epsilon.\]

Replacing $\Delta^n f$ by $\Delta^{-n} f$ in the argument above,  we get similarly,
\[\|\Delta^{-n} f\|_{2, \infty}\le \|f\|_{2, \infty} (R^-_\psi)^{-n} n! C_{\psi', \mu}\] which implies
$c_2=\lim_{n\to \infty} \|\Delta^{-n} f\|^{1/n}\le (R^-_\psi)^{-1}$, hence $1/c_2\ge R^-_\psi \ge R^-_f-\epsilon$. This completes the proof of part (b)

(c)  If $c_1c_2=1$ then $\alpha=\beta$, hence $\wtilde{f}$ is supported on the sphere $\{\alpha\}\times B$ of radius $\alpha$. Therefore (c) follows from Lemma \ref{eigen-for-nonradial}.

(a) We have used the two conditions of the hypothesis independently  to prove that $\wtilde{f}$ is supported in a subset of $[0, \alpha]$ and also in a subset of $[\beta, \infty)$ for $\alpha, \beta\in \R^+$. If $\alpha<\beta$ then the support of $\wtilde{f}$ is empty  and hence $f=0$, contradicting the hypothesis. Therefore $\alpha\ge \beta$, equivalently $c_1c_2\ge 1$.

(d) When $\beta=0$ equivalently $c_2=1/\rho^2$ then the annulus $\mathbb A_\beta^\alpha$ obviously reduces to a ball around origin of radius $\alpha$. If $c_1=\rho^2$, then $\alpha=0$ implying that $\alpha=\beta=0$ by (a)   and the interval $[\beta, \alpha]$ degenerates to a singleton set $\{0\}$. Hence by (c) $f$ is an eigenfunction with eigenvalue $-\rho^2$. Then by Proposition \ref{summary-prop} (v), $f\not\in L^{2, \infty}(X)$ contradicting the hypothesis. Therefore $c_1>\rho^2$, equivalently $\alpha>0$, i.e. the $\mathbb A_\beta^\alpha$ does not collapse to origin.
\end{proof}

We shall now prove the lemmas to complete the proof of  (c). We shall write $\partial_\lambda$, $\partial^n_\lambda$ respectively for $\frac{d}{d\lambda}$ and $\frac{d^n}{d\lambda^n}$.
\begin{lemma} \label{polynomial} For any nonconstant polynomial $P$ and  $\lambda_0 >0$, $P(\partial_\lambda) \phi_\lambda|_{\lambda=\lambda_0}\not\in L^{2, \infty}(X)$.
\end{lemma}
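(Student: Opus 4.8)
The plan is to show that the radial function $g:=P(\partial_\lambda)\phi_\lambda|_{\lambda=\lambda_0}$ fails to lie in $L^{2,\infty}(X)$ by producing levels $s$ along which $s\,d_g(s)^{1/2}$ blows up. Note first that $g$ is radial, since $\phi_\lambda$ is radial for every $\lambda$ and differentiation in the parameter preserves radiality; hence $g$ is a function of $t=|x|$ alone and the whole matter reduces to the large-$t$ asymptotics of $g(a_t)$. Write $m:=\deg P\ge 1$ and $P(\partial_\lambda)=\sum_{j=0}^m a_j\partial_\lambda^j$ with $a_m\ne0$. First I would invoke the Harish-Chandra expansion (see \cite[\S4.5--4.6]{GV})
\[\phi_\lambda(a_t)=\hc(\lambda)\Phi_\lambda(a_t)+\hc(-\lambda)\Phi_{-\lambda}(a_t),\qquad \Phi_\lambda(a_t)=e^{(i\lambda-\rho)t}\bigl(1+o(1)\bigr)\ (t\to\infty),\]
together with the fact that, for $\lambda$ real (so that we stay away from the poles of the holomorphic coefficients of $\Phi_\lambda$), the expansion may be differentiated term by term in $\lambda$, giving $\partial_\lambda^j\Phi_\lambda(a_t)=(it)^j e^{(i\lambda-\rho)t}(1+o(1))$ uniformly near $\lambda_0$; the powers of $t$ are produced solely by differentiating the exponential $e^{(i\lambda-\rho)t}$, all other contributions being of strictly lower order in $t$.

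Applying $\partial_\lambda^j$ to $\phi_\lambda$ by the Leibniz rule and keeping only the top power of $t$ yields $\partial_\lambda^j\phi_\lambda(a_t)=\hc(\lambda)(it)^j e^{(i\lambda-\rho)t}+\hc(-\lambda)(-it)^j e^{(-i\lambda-\rho)t}+O(t^{j-1}e^{-\rho t})$. Summing against the coefficients $a_j$ and using $a_m\ne0$, the term $j=m$ dominates, so that
\[g(a_t)=a_m\, i^m\, t^m e^{-\rho t}\bigl[\hc(\lambda_0)e^{i\lambda_0 t}+(-1)^m\hc(-\lambda_0)e^{-i\lambda_0 t}\bigr]+O\bigl(t^{m-1}e^{-\rho t}\bigr),\quad t\to\infty.\]
The crucial non-degeneracy is that for $\lambda_0\in\R^\times$ one has $\hc(\lambda_0)\ne0$ and $\hc(-\lambda_0)=\overline{\hc(\lambda_0)}$; writing $\hc(\lambda_0)=re^{i\psi}$ with $r>0$, the bracket has modulus $2r\,|\cos(\lambda_0 t+\psi)|$ for $m$ even and $2r\,|\sin(\lambda_0 t+\psi)|$ for $m$ odd. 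In either case there is a constant $c_0>0$ and a threshold $T_1$ so that on the good set $S:=\{t\ge T_1:\ |\text{trig}(\lambda_0 t+\psi)|\ge 1/2\}$ one has $|g(a_t)|\ge c_0\,t^m e^{-\rho t}$; moreover $S$ has positive lower density, since in every period of length $2\pi/\lambda_0$ the relevant sinusoid exceeds $1/2$ in absolute value on a subinterval of length at least $4\pi/(3\lambda_0)$.

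Finally I would convert this into a lower bound for the weak $L^2$ quasinorm using the volume growth $\int_{\{|x|\in I\}}dx\asymp\int_I e^{2\rho t}\,dt$ read off from \eqref{polar-2}. Fix a large $T$ and set $I_T:=S\cap[T-2\pi/\lambda_0,T]$, so $|I_T|\ge 4\pi/(3\lambda_0)$; since $t^m e^{-\rho t}$ varies only by a bounded factor across the fixed-length window $[T-2\pi/\lambda_0,T]$, one gets $|g(a_t)|\ge c_1 T^m e^{-\rho T}$ on $I_T$ for some $c_1>0$ and all large $T$. Taking the level $s_T:=\tfrac12 c_1 T^m e^{-\rho T}$ then gives
\[d_g(s_T)\ \ge\ \bigl|\{x:|x|\in I_T\}\bigr|\ \gtrsim\ e^{2\rho T}\,|I_T|\ \gtrsim\ e^{2\rho T},\]
whence $s_T\,d_g(s_T)^{1/2}\gtrsim T^m e^{-\rho T}\cdot e^{\rho T}=T^m\to\infty$ because $m\ge1$. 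Thus $\|g\|^*_{2,\infty}=\sup_{s>0}s\,d_g(s)^{1/2}=\infty$ and $g\notin L^{2,\infty}(X)$. The main obstacle is the middle step: one must control the full $\lambda$-dependence of the Harish-Chandra expansion finely enough to be sure that $m$-fold differentiation produces a genuinely nonvanishing $t^m$-term (this is where $\hc(\lambda_0)\ne0$ and the independence of $e^{\pm i\lambda_0 t}$ enter), and then replace the oscillating pointwise bound by the positive-density lower bound on the good set $S$ that the weak-$L^2$ estimate actually requires.
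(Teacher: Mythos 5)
Your argument is correct, but it follows a genuinely different route from the paper's. The paper does not attack $P(\partial_\lambda)\phi_\lambda|_{\lambda=\lambda_0}$ head-on: it first reduces the general nonconstant polynomial to the single derivative $\partial_\lambda\phi_\lambda|_{\lambda=\lambda_0}$ by convolving with an auxiliary $\psi\in C^2(G//K)$ whose spherical transform vanishes to order $n-2$ at $\lambda_0$ (so that after Leibniz expansion only the $\phi_{\lambda_0}$- and $\partial_\lambda\phi_\lambda|_{\lambda_0}$-terms survive, and Proposition \ref{summary-prop} (iv), (vii) transfer membership in $L^{2,\infty}$), and then, in Lemma \ref{polynomial-1}, handles the first derivative via the Harish--Chandra expansion; there the oscillation $a(\lambda_0)\sin\lambda_0 t+b(\lambda_0)\cos\lambda_0 t$ is disposed of not by a good-set density argument but by translating by a quarter period and taking linear combinations to reduce to the elementary fact $te^{-\rho t}\notin L^{2,\infty}([1,\infty),e^{2\rho t}dt)$. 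You instead differentiate the expansion $m$ times directly, isolate the leading term $a_m i^m t^m e^{-\rho t}\bigl[\hc(\lambda_0)e^{i\lambda_0 t}+(-1)^m\hc(-\lambda_0)e^{-i\lambda_0 t}\bigr]$, and lower-bound the distribution function on the positive-density set where the sinusoid is bounded away from zero. Your approach is more self-contained (one computation for all degrees, no convolution reduction, no translation-boundedness interpolation) but requires justifying term-by-term $\lambda$-differentiation of the expansion to all orders up to $m$ with uniform control of $\partial_\lambda^j\Gamma_k$ near $\lambda_0$ -- the paper only ever needs this for $j=1$; both proofs rest on the same nondegeneracy $\hc(\lambda_0)\neq 0$ for $\lambda_0\in\R^\times$. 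Two small points to tighten: the ``good set'' in each period is a union of subintervals rather than a single one (only its measure matters, so this is cosmetic), and you should state explicitly the uniform bound $|\partial_\lambda^j\Gamma_k(\lambda)|\le C(1+k)^{N}$ near $\lambda_0$ that makes the $O(t^{j-1}e^{-\rho t})$ error term legitimate for $t\ge 1$.
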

\begin{proof}     Let $P$ be a polynomial  of degree $n$ given by $P(y)=a_0y^n+a_1y^{n-1}+\ldots+a_n, a_0\neq 0$. We shall show that if $P(\partial_\lambda) \phi_\lambda|_{\lambda=\lambda_0}\in L^{2, \infty}(X)$, then $\partial_\lambda \phi_\lambda|_{\lambda=\lambda_0}\in L^{2, \infty}(X)$. Use of Lemma \ref{polynomial-1} then completes the proof.

So, we assume that  $P(\partial_\lambda) \phi_\lambda|_{\lambda=\lambda_0}\in L^{2, \infty}(X)$.
If $n=1$, then $P(\partial_\lambda) \phi_\lambda=a_0 \partial_\lambda \phi_\lambda+a_1 \phi_\lambda$. Since $P(\partial_\lambda) \phi_\lambda|_{\lambda=\lambda_0}\in L^{2, \infty}(X)$ and $a_1 \phi_{\lambda_0}\in L^{2, \infty}(X)$ (see Proposition \ref{summary-prop} (vii)) we have $a_0 \partial_\lambda \phi_\lambda|_{\lambda=\lambda_0}\in L^{2, \infty}(X)$. If $n\ge 2$,  we  take a  function $\psi\in C^2(G//K)$ such that $\what{\psi}$ and its derivatives of orders up to $(n-2)$ are  zero at $\lambda_0$. Then $\partial_\lambda^{n-r}(\what{\psi}(\lambda)\phi_\lambda)|_{\lambda=\lambda_0}=0$ for all $2\le r\le n$.

We note that $P(\partial_\lambda) \phi_\lambda|_{\lambda=\lambda_0}\ast \psi=P(\partial_\lambda) (\phi_\lambda\ast \psi)|_{\lambda=\lambda_0}$ where the convolution can be justified from the estimate of $P(\partial_\lambda) \phi_\lambda$ (see \eqref{estimates-phi-lambda-classical-1}, \eqref{estimates-phi-lambda-classical-2}).
Hence,
\[P(\partial_\lambda) \phi_\lambda|_{\lambda=\lambda_0}\ast \psi=\{a_0\partial_\lambda^n (\what{\psi}(\lambda)\phi_{\lambda})+a_1 \partial_\lambda^{n-1} (\what{\psi}(\lambda)\phi_{\lambda})\}|_{\lambda=\lambda_0}.\] Expanding the derivatives in the right hand side by Leibnitz rule and using that   $\what{\psi}$ and its derivatives of order $1, 2, \ldots, n-2$ vanish at $\lambda_0$ we get,
\[P(\partial_\lambda) \phi_\lambda|_{\lambda=\lambda_0}\ast \psi= [a_0\{\phi_\lambda\partial_\lambda^n (\what{\psi}(\lambda)) + n (\partial_\lambda \phi_\lambda) \partial_\lambda^{n-1} (\what{\psi}(\lambda))\}+a_1 \phi_\lambda \partial_\lambda^{n-1} (\what{\psi}(\lambda))]_{\lambda=\lambda_0}.\]

The assumption  $P(\partial_\lambda) \phi_\lambda|_{\lambda=\lambda_0}\in L^{2, \infty}(X)$ implies that  $P(\partial_\lambda) \phi_\lambda|_{\lambda=\lambda_0}\ast \psi\in L^{2, \infty}(X)$ (Proposition \ref{summary-prop}  (iv)). Since $\phi_{\lambda_0}\in L^{2, \infty}(X)$ (Proposition \ref{summary-prop} (vii)), we get from above that $\partial_\lambda \phi_\lambda|_{\lambda=\lambda_0}\in  L^{2, \infty}(X)$.
 \end{proof}
 \begin{lemma}\label{polynomial-1}
 For any $\lambda_0\in \R^+$,  $\partial_\lambda \phi_\lambda|_{\lambda=\lambda_0}\not\in  L^{2, \infty}(X)$.
\end{lemma}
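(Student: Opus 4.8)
The plan is to reduce the statement to a sharp lower bound on the radial profile of $\psi_0:=\partial_\lambda\phi_\lambda|_{\lambda=\lambda_0}$ and then read off from the volume growth of $X$ that its weak $L^2$-norm diverges; throughout I take $\lambda_0>0$, as in Lemma \ref{polynomial} (at $\lambda_0=0$ the function $\psi_0$ vanishes by the evenness $\phi_\lambda=\phi_{-\lambda}$, so there is nothing to prove). Since $\phi_\lambda$ is radial and holomorphic in $\lambda$, $\psi_0$ is a radial function, so by the description of $\|\cdot\|_{2,\infty}$ in subsection \ref{pri-Lorentz} it suffices to show that $\sup_{s>0}s^2 d_{\psi_0}(s)=\infty$, where $d_{\psi_0}$ is the distribution function computed with the polar-coordinate measure of \eqref{polar-2}, which for large $t$ is comparable to $e^{2\rho t}\,dt$.

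The key step is the asymptotics of $|\psi_0(a_t)|$ as $t\to\infty$. For this I invoke the Harish-Chandra expansion (see \cite[Ch.~4]{GV}): for $\lambda$ in a complex neighbourhood of $\lambda_0$,
\[\phi_\lambda(a_t)=c(\lambda)\,e^{(i\lambda-\rho)t}\big(1+O(e^{-2t})\big)+c(-\lambda)\,e^{(-i\lambda-\rho)t}\big(1+O(e^{-2t})\big),\]
with coefficients depending holomorphically on $\lambda$ and the remainders and their $\lambda$-derivatives uniformly $O(e^{-2t})$ on the neighbourhood. Differentiating in $\lambda$ at $\lambda_0$, the term in which $\partial_\lambda$ falls on the exponential dominates, giving
\[\psi_0(a_t)=it\,e^{-\rho t}\big(c(\lambda_0)e^{i\lambda_0 t}-c(-\lambda_0)e^{-i\lambda_0 t}\big)+O\big(e^{-\rho t}\big),\]
the $O(e^{-\rho t})$ collecting the $c'(\pm\lambda_0)$-terms (bounded in $t$) and the differentiated remainders ($O(te^{-(\rho+2)t})$). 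Using $c(-\lambda_0)=\overline{c(\lambda_0)}$ (valid for real $\lambda_0$) this reads $\psi_0(a_t)=-2t\,e^{-\rho t}\,|c(\lambda_0)|\sin(\lambda_0 t+\arg c(\lambda_0))+O(e^{-\rho t})$. Because $\lambda_0\in\R^\times$ we have $c(\lambda_0)\neq 0$ (the Plancherel density $|c(\lambda)|^{-2}$ is finite and positive off the origin), so there are $c_0>0$ and $T_0$ with $|\psi_0(a_t)|\ge c_0\,t\,e^{-\rho t}$ for all $t$ in the set $E=\{t\ge T_0:\ |\sin(\lambda_0 t+\arg c(\lambda_0))|\ge 1/2\}$, which has positive lower density.

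Finally I bound $d_{\psi_0}$ from below. Fix small $s>0$ and let $T(s)$ be the large solution of $c_0\,T e^{-\rho T}=s$; taking logarithms gives $\rho T(s)=\log(c_0 T(s)/s)$, hence $T(s)\sim \rho^{-1}\log(1/s)$ and $e^{2\rho T(s)}\asymp (\log(1/s))^2/s^2$. For $t$ in the one-period interval $[T(s)-\pi/\lambda_0,\,T(s)]$ one has $c_0 t e^{-\rho t}>s$ (we are on the decreasing branch), so $|\psi_0(a_t)|>s$ on $E$ there, and since $E$ meets this interval in a set of measure $\ge \tfrac{2\pi}{3\lambda_0}$ while $e^{2\rho t}$ is increasing,
\[d_{\psi_0}(s)\ \ge\ \int_{E\cap[T(s)-\pi/\lambda_0,\,T(s)]}e^{2\rho t}\,dt\ \gtrsim\ e^{2\rho T(s)}\ \asymp\ \frac{(\log(1/s))^2}{s^2}.\]
Thus $s^2 d_{\psi_0}(s)\gtrsim(\log(1/s))^2\to\infty$ as $s\to 0^+$, whence $\|\psi_0\|_{2,\infty}=\infty$ and $\psi_0\notin L^{2,\infty}(X)$.

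The main obstacle is that the leading profile $t e^{-\rho t}$ carries an oscillation $\sin(\lambda_0 t+\cdots)$, so one cannot bound $|\psi_0|$ below pointwise; the device of restricting to the positive-density set $E$ (and exploiting monotonicity of the volume weight) circumvents this. The only other point needing care is the uniform control, under $\lambda$-differentiation, of the remainders in the Harish-Chandra expansion together with the non-vanishing $c(\lambda_0)\neq0$, which guarantees that the $t e^{-\rho t}$ term is genuinely present and is not cancelled by the subleading contributions.
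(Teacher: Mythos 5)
Your argument is correct, and it diverges from the paper's at the decisive step. Both proofs start from the same Harish-Chandra expansion $\phi_\lambda(a_t)=e^{-\rho t}[\hc(\lambda)e^{i\lambda t}+\hc(-\lambda)e^{-i\lambda t}+E(\lambda,t)]$ and both reduce the problem to showing that the oscillating leading term $t e^{-\rho t}\sin(\lambda_0 t+\arg \hc(\lambda_0))$ fails to be in $L^{2,\infty}([1,\infty),e^{2\rho t}dt)$; the subleading contributions ($\hc'$-terms and differentiated remainders) are disposed of identically. Where the paper proceeds \emph{softly} --- assuming membership in $L^{2,\infty}$, translating by a quarter period $\pi/2\lambda_0$ (boundedness of translation on the weighted weak space being obtained by interpolation), and taking linear combinations to conclude that $te^{-\rho t}$ itself would lie in $L^{2,\infty}([1,\infty),e^{2\rho t}dt)$, a contradiction --- you argue \emph{directly}: you restrict to the positive-density set $E$ where $|\sin(\lambda_0 t+\theta_0)|\ge 1/2$, solve $c_0Te^{-\rho T}=s$, and compute $s^2 d_{\psi_0}(s)\gtrsim(\log(1/s))^2$. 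Your route buys a quantitative rate of divergence of the weak norm and avoids the translation-boundedness lemma entirely, at the price of the (routine but explicit) bookkeeping with the level sets of the sine; the paper's route avoids all level-set geometry but needs the auxiliary facts that translation preserves $L^{2,\infty}([1,\infty),e^{2\rho t}dt)$ and that $te^{-\rho t}$ is not in that space. Both are complete, and the uniformity you require of the $\lambda$-differentiated remainder is exactly what the paper also invokes (term-by-term differentiation of the $\Gamma_k$-series). One small correction to your opening parenthesis: at $\lambda_0=0$ the oddness of $\lambda\mapsto\partial_\lambda\phi_\lambda$ gives $\psi_0\equiv 0$, which \emph{does} lie in $L^{2,\infty}(X)$, so the statement is actually false there rather than vacuous; the intended hypothesis (consistent with Lemma \ref{polynomial} and with the paper's own proof, which uses the expansion only for $\lambda\in\R^\times$) is $\lambda_0>0$, which is what you in fact assume.
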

\begin{proof}
In view of the polar decomposition and the corresponding integral formula  \eqref{polar-2}  and  the identification of $A$ with $\R$,  it suffices to show that $\partial_\lambda \phi_\lambda|_{\lambda=\lambda_0}$ restricted to $[1, \infty)$ does not belong to $L^{2, \infty}([1, \infty), e^{2\rho t}dt)$. We shall use the facts that $e^{-\rho t}\in L^{2, \infty}([1, \infty), e^{2\rho t}dt)$ and  $te^{-\rho t}\not \in L^{2, \infty}([1, \infty), e^{2\rho t}dt)$, which are easily verifiable through straightforward computation.  We recall that $\phi_\lambda$ for any $\lambda\in \R^\times$, has the following expansion (see \cite{Stan-Tom, Ion-Pois-1})
\[\phi_\lambda(t)=e^{-\rho t}[\hc(\lambda)e^{i\lambda t}+\hc(-\lambda)e^{-i\lambda t}+E(\lambda, t)],\] where
\[E(\lambda, t)=\hc(\lambda)e^{i\lambda t} \sum_{k=1}^\infty \Gamma_k(\lambda) e^{-2kt} + \hc(-\lambda)e^{-i\lambda t} \sum_{k=1}^\infty \Gamma_k(-\lambda) e^{-2kt}\] and
$\Gamma_k$ are recursively defined by $\Gamma_0(\lambda)=1$ and
\[(k+1)(k+1-i\lambda) \Gamma_{k+1} =(\rho+k) (\rho+k-i\lambda) \Gamma_k+ m_{2\gamma} \sum_{j=0}^k (-1)^{k+j+1} (\rho+2j-i\lambda) \Gamma_j.\]
For $t\ge 1$ the series defining $E(\lambda, t)$ and its $\lambda$-derivative at $\lambda=\lambda_0$ are uniformly convergent. Term by term differentiation shows that $|E(\lambda, t)|\le C_\lambda$ for some constant $C_\lambda$ for $t\ge 1$. Thus $e^{-\rho t} E(\lambda, t)\in L^{2, \infty}([1, \infty), e^{2\rho t}dt)$. Therefore we need to show that
\[e^{-\rho t}\partial_\lambda [\hc(\lambda)e^{i\lambda t}+\hc(-\lambda)e^{-i\lambda t}]|_{\lambda=\lambda_0}\not\in  L^{2, \infty}([1, \infty), e^{2\rho t}dt).\] Noting that $\overline{\hc(\lambda)}=\hc(-\lambda)$  and writing $\hc(\lambda)=a(\lambda)+i\, b(\lambda)$ where $a(\lambda), b(\lambda)$ are real functions, we have
\begin{eqnarray*}e^{-\rho t}\partial_\lambda [\hc(\lambda)e^{i\lambda t}+\hc(-\lambda)e^{-i\lambda t}]=&2e^{-\rho t}\partial_\lambda \,(\Re(\hc(\lambda) e^{i\lambda t}))\\=&2e^{-\rho t}\partial_\lambda (a(\lambda)\cos \lambda t - b(\lambda)\sin \lambda t)\\
=&-2te^{-\rho t} (a(\lambda)\sin \lambda t + b(\lambda)\cos \lambda t)\\
+& 2e^{-\rho t} (\partial_\lambda(a(\lambda))\cos \lambda t - \partial_\lambda(b(\lambda))\sin \lambda t).
\end{eqnarray*}
Since at $\lambda=\lambda_0$  the last term in the equality above is in $L^{2, \infty}([1, \infty), e^{2\rho t}dt)$, we  need only to show that $g(t)=te^{-\rho t} (a(\lambda_0)\sin \lambda_0 t + b(\lambda_0)\cos \lambda_0 t)\not\in L^{2, \infty}([1, \infty), e^{2\rho t}dt)$. For the sake of meeting a contradiction, we assume that $g\in L^{2, \infty}([1, \infty), e^{2\rho t}dt)$. Then its translation by  $\pi/2\lambda_0$ is  $g(t+\pi/2\lambda_0)=C(t+\pi/2\lambda_0)e^{-\rho t} (a(\lambda_0) \cos\lambda_0 t-b(\lambda_0) \sin \lambda_0 t)$, which  is also in $L^{2, \infty}([1, \infty), e^{2\rho t}dt)$. This follows from  interpolation of the facts that for $1<p<2<q$, translation by a fixed element in $\R$ is a bounded operator  from $L^p$ to $L^p$ and from $L^q$ to $L^q$ in the measure space $([1, \infty), e^{2\rho t} dt)$.

We note that the part
$C(\pi/2\lambda_0) e^{-\rho t} (a(\lambda_0) \cos\lambda_0 t-b(\lambda_0) \sin \lambda_0 t)$ of $g(t+\pi/2\lambda_0)$ is  in $L^{2, \infty}([1, \infty), e^{2\rho t}dt)$. Therefore the other part  of $g(t+\pi/2\lambda_0)$, given by
$h(t)=t e^{-\rho t} (-b(\lambda_0) \sin \lambda_0 t +a(\lambda_0) \cos\lambda_0 t) \in L^{2, \infty}([1, \infty), e^{2\rho t}dt)$. Since $g(t)$ and $h(t)$ are in  $L^{2, \infty}([1, \infty), e^{2\rho t}dt)$ we have,
\begin{eqnarray*}
b(\lambda_0) g(t)+a(\lambda_0) h(t)&= te^{-\rho t} (a(\lambda_0)^2+b(\lambda_0)^2) \cos \lambda_0 t\in L^{2, \infty}([1, \infty), e^{2\rho t}dt),\\
a(\lambda_0) g(t) - b(\lambda_0) h(t)&= te^{-\rho t} (a(\lambda_0)^2+b(\lambda_0)^2) \sin \lambda_0 t\in L^{2, \infty}([1, \infty), e^{2\rho t}dt).\\
\end{eqnarray*}
Hence $(a(\lambda_0)^2+b(\lambda_0)^2) e^{i\lambda_0 t} te^{-\rho t}\in L^{2, \infty}([1, \infty), e^{2\rho t}dt)$ which amounts to say that $te^{-\rho t}\in L^{2, \infty}([1, \infty), e^{2\rho t}dt)$, a contradiction.
\end{proof}

\begin{lemma} \label{use-abel} For any polynomial $P$ in one variable and  $\xi\in \R$, $\Ab^\ast(P(\partial_\xi) e^{-i\xi t})=P(\partial_\xi)\phi_\xi$ as $L^2$-tempered distribution on $X$, equivalently  $(\Ab^\ast)^{-1} (P(\partial_\xi) \phi_\xi)=P(\partial_\xi) e^{-i\xi t}$ as tempered distribution on $\R$.
 \end{lemma}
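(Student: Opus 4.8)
The plan is to reduce the general polynomial case to the degree-zero case $P\equiv 1$ by linearity, and to obtain that case directly from the slice projection theorem. First I would record the base identity $\Ab^\ast(e^{-i\xi t})=\phi_\xi$ in $C^2(G//K)'$. For $\psi\in C^2(G//K)$ the definition of the adjoint gives $\langle \Ab^\ast(e^{-i\xi t}),\psi\rangle=\langle e^{-i\xi t},\Ab\psi\rangle=\mathcal F(\Ab\psi)(\xi)$, which the slice projection theorem (subsection \ref{pri-Abel}) identifies with $\what\psi(\xi)$. On the other hand $\phi_\xi=\phi_{-\xi}$ together with the definition of the spherical transform gives $\langle \phi_\xi,\psi\rangle=\int_X\psi(x)\phi_{-\xi}(x)\,dx=\what\psi(\xi)$. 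Since the two pairings agree on every $\psi\in C^2(G//K)$, the base identity holds.

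Next I would differentiate both sides in the parameter $\xi$. The operator $\Ab^\ast$ acts in the $t$-variable and is independent of $\xi$, so the whole content is to move $\partial_\xi^n$ through each of the two pairings. On the Fourier side this is routine: $\partial_\xi^n e^{-i\xi t}=(-it)^n e^{-i\xi t}$ has polynomial growth, hence is a tempered distribution on $\R$ whose restriction to $S(\R)_{even}$ lies in $S(\R)'_{even}$, so that $\Ab^\ast$ applies to it; and since $\Ab\psi\in S(\R)_{even}$ makes $t^n\Ab\psi(t)$ integrable, differentiation under the integral sign yields $\langle \partial_\xi^n e^{-i\xi t},\Ab\psi\rangle=\int_\R(-it)^n e^{-i\xi t}\Ab\psi(t)\,dt=\partial_\xi^n\mathcal F(\Ab\psi)(\xi)=\partial_\xi^n\what\psi(\xi)$. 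On the symmetric-space side I would invoke the uniform bound \eqref{estimates-phi-lambda-classical-2}: for real $\lambda=\xi$ it reads $|\partial_\xi^n\phi_\xi(x)|\le C(1+|x|)^n\phi_0(x)$, with a constant independent of the real parameter $\xi$. Combined with the Schwartz decay $|\psi(x)|\le C_M(1+|x|)^{-M}\phi_0(x)$ valid for every $M$ by the definition of $C^2(X)$, and the integrability of $\phi_0^2(1+|x|)^{n-M}$ for large $M$ (as in the proof of Proposition \ref{summary-prop}(iv)), this simultaneously shows that $\partial_\xi^n\phi_\xi$ is a genuine $K$-biinvariant $L^2$-tempered distribution and licenses differentiation under the integral, giving $\langle \partial_\xi^n\phi_\xi,\psi\rangle=\partial_\xi^n\langle\phi_\xi,\psi\rangle=\partial_\xi^n\what\psi(\xi)$.

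Comparing the two computations gives $\langle \Ab^\ast(\partial_\xi^n e^{-i\xi t}),\psi\rangle=\partial_\xi^n\what\psi(\xi)=\langle \partial_\xi^n\phi_\xi,\psi\rangle$ for every $\psi\in C^2(G//K)$, hence $\Ab^\ast(\partial_\xi^n e^{-i\xi t})=\partial_\xi^n\phi_\xi$ in $C^2(G//K)'$. Writing $P(y)=\sum_j a_j y^j$ and using linearity of both $\Ab^\ast$ and $P(\partial_\xi)$ then yields $\Ab^\ast(P(\partial_\xi)e^{-i\xi t})=P(\partial_\xi)\phi_\xi$. The equivalent second formulation is immediate: since $\Ab^\ast\colon S(\R)'_{even}\to C^2(G//K)'$ is an isomorphism (subsection \ref{pri-Abel}), applying $(\Ab^\ast)^{-1}$ to both sides gives $(\Ab^\ast)^{-1}(P(\partial_\xi)\phi_\xi)=P(\partial_\xi)e^{-i\xi t}$.

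I expect the only real obstacle to be the justification on the symmetric-space side, namely controlling the $\xi$-derivatives of $\phi_\xi$ uniformly enough to pass $\partial_\xi^n$ inside $\int_X$; this is exactly the purpose of estimate \eqref{estimates-phi-lambda-classical-2}, and the same estimate also guarantees that $\partial_\xi^n\phi_\xi$ pairs continuously with $C^2(G//K)$, so that the asserted equality of distributions is meaningful. The Fourier-side manipulations and the final linearity step are routine.
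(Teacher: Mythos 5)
Your proposal is correct and follows essentially the same route as the paper: both prove the identity by pairing against $\psi\in C^2(G//K)$, using the definition of the adjoint, the slice projection theorem $\mathcal F(\Ab\psi)=\what\psi$, and passage of $\partial_\xi$ through the pairings (the paper treats only $P(\partial_\xi)=\partial_\xi$ and declares this sufficient, while you handle $\partial_\xi^n$ directly and supply the justification, via \eqref{estimates-phi-lambda-classical-2}, for differentiating under the integral — a detail the paper leaves implicit). The extra care is welcome but does not constitute a different method.
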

   \begin{proof}
Enough to show this for $P(\partial_\xi)=\partial_\xi$. Let $\psi\in C^2(G//K)$. Then $\Ab \psi\in S(\R)_{even}$.  We have
 \[\langle \Ab \psi, \partial_\xi e^{-i\xi t}\rangle = \langle \psi, \Ab^\ast(\partial_\xi e^{-i\xi t})\rangle.\] On the other hand  using slice-projection theorem (see subsection \ref{pri-Abel}) we have,
 \[\langle \Ab \psi, \partial_\xi e^{-i\xi t}\rangle = \partial_\xi \mathcal F(\Ab \psi)(\xi)= \partial_\xi\what{\psi}(\xi)=\partial_\xi \langle \psi, \phi_\xi\rangle=\langle \psi, \partial_\xi\phi_\xi\rangle.\]
 Thus $\langle \psi, \Ab^\ast(\partial_\xi e^{-i\xi t})\rangle=\langle \psi, \partial_\xi\phi_\xi\rangle$,  for all $\psi\in C^2(G//K)$, implying
 $\Ab^\ast(\partial_\xi e^{-\xi t})=\partial_\xi\phi_\xi$ as $L^2$-tempered distributions.  As $\Ab^\ast$ is an isomorphism from $S(\R)_{even}$ to $C^2(G//K)$, the  equivalent statement follows.
\end{proof}

\begin{lemma}
\label{radial-nonradial}
Let $f_1, f_2$ be two nonzero functions in $L^{2, \infty}(X)$. Then the following statements are true.
\begin{enumerate}[{\em (a)}]
\item There exists $x\in G$ such that $R(\ell_x f_1)\neq 0$.
\item If  for some $x\in G$, $R(\ell_x f_1)\neq 0$, then $R(\Delta^n\ell_x f_1)\neq 0$ for all $n\in \Z$.
\item If $R(\ell_x f_1) = R(\ell_x f_2)$ for all $x\in G$, then $f_1=f_2$.
\end{enumerate}
\end{lemma}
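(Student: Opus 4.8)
The three parts have different flavours: (c) will be immediate from (a) by linearity, and (b) will follow from the spectral gap of $\Delta$ once it is rephrased via the spherical transform, so the substantive content lies in (a). My plan is therefore to establish (a) first, then deduce (b) and (c).

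For (a) I would prove the contrapositive: if $R(\ell_x f_1)=0$ for every $x\in G$, then $f_1=0$, contradicting $f_1\neq 0$. The idea is to pair $R(\ell_x f_1)$ with radial test functions and recognize a convolution. Let $\psi\in C^2(G//K)$. Being left $K$-invariant, $R\psi=\psi$; and since $W=\{\mathrm{Id},-\mathrm{Id}\}$, $\psi$ also satisfies $\psi(g^{-1})=\psi(g)$. Using $\langle R(\ell_x f_1),\psi\rangle=\langle \ell_x f_1,R\psi\rangle=\langle \ell_x f_1,\psi\rangle$ and a change of variable (invoking $\psi(g^{-1})=\psi(g)$), I expect the identity
\[\langle R(\ell_x f_1),\psi\rangle=(f_1\ast\psi)(x^{-1}).\]
Hence $R(\ell_x f_1)=0$ for all $x$ forces $f_1\ast\psi\equiv 0$ for every radial $\psi\in C^2(G//K)$ (the pairing is legitimate since $f_1\ast\psi\in L^{2,\infty}(X)$ by Proposition \ref{summary-prop}(iv)). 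Taking $\psi=\psi_\varepsilon$ a radial approximate identity and using $\langle f_1\ast\psi_\varepsilon,\phi\rangle=\langle f_1,\phi\ast\psi_\varepsilon\rangle$ together with $\phi\ast\psi_\varepsilon\to\phi$ in $C^2(X)$, I would conclude $\langle f_1,\phi\rangle=0$ for every $\phi\in C^2(X)$, i.e. $f_1=0$.

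For (b) I would first invoke the commutation relations recorded in Section \ref{preparation}: $R$ commutes with $\Delta^n$ and $\Delta^n$ commutes with $\ell_x$ for every $n\in\Z$, so $R(\Delta^n\ell_x f_1)=\Delta^n R(\ell_x f_1)$. Writing $h=R(\ell_x f_1)$, which by hypothesis is a nonzero \emph{radial} element of $L^{2,\infty}(X)$ and hence a nonzero radial $L^2$-tempered distribution, it suffices to show $\Delta^n h\neq 0$ for all $n\in\Z$. Passing to the spherical transform gives $\what{\Delta^n h}(\lambda)=(-(\lambda^2+\rho^2))^n\,\what{h}(\lambda)$. Since the spectrum of $\Delta$ is $(-\infty,-\rho^2]$, one has $\lambda^2+\rho^2\ge\rho^2>0$ on $\R$, so the multiplier $(-(\lambda^2+\rho^2))^n$ is smooth and nowhere vanishing (this gap is precisely what makes the negative powers meaningful as well). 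Multiplication by a nowhere-vanishing smooth function is injective on distributions, so $\what{\Delta^n h}\neq 0$, whence $\Delta^n h\neq 0$ and therefore $R(\Delta^n\ell_x f_1)\neq 0$.

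For (c), set $f=f_1-f_2\in L^{2,\infty}(X)$; by linearity of $R$ and $\ell_x$ the hypothesis gives $R(\ell_x f)=0$ for all $x\in G$, so if $f\neq 0$ part (a) would furnish some $x$ with $R(\ell_x f)\neq 0$, a contradiction, forcing $f_1=f_2$. The main obstacle is the approximate-identity limit in (a): establishing the convolution identity and, above all, justifying that $\phi\ast\psi_\varepsilon\to\phi$ in the $L^2$-Schwartz topology of $C^2(X)$ so that the distributional pairing passes to the limit. Parts (b) and (c) are then essentially bookkeeping with the commutation relations, the spectral gap, and linearity.
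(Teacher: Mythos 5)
Your proposal is correct and follows essentially the same route as the paper: for (a) the paper also argues by contrapositive, observing that $R(\ell_x f_1)\equiv 0$ forces $f_1\ast\psi\equiv 0$ for radial $\psi\in C^2(G//K)$ and then lets a radial approximate identity (specifically the heat kernel $h_t$, with $f_1\ast h_t\to f_1$ distributionally) give $f_1=0$, and (c) is likewise obtained by applying (a) to $f_1-f_2$. For (b) the paper reaches the same conclusion from the same spectral-gap fact, only phrased on the test-function side ($\Delta\Delta^{-1}=\mathrm{Id}$ and surjectivity of $\Delta$ on $C^2(G//K)$ since $\what{\phi}(\lambda)(\lambda^2+\rho^2)^{-1}\in C^2(\what{G//K})$) rather than via injectivity of the nowhere-vanishing multiplier on distributions; the two formulations are interchangeable.
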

\begin{proof} If $R(\ell_x f_1)=0$  for all $x\in G$, then for any $h\in C^2(G//K)$,
$\langle\ell_x f_1, h\rangle=0$.  Let $h_t, t>0$ be the heat kernel which is an element  in $C^2(G//K)$ defined through its spherical Fourier transform $\what{h_t}(\lambda)=e^{-t(\lambda^2+\rho^2)}$. Taking $h=h_t$ we thus get  $\langle\ell_x f_1, h_t\rangle=0$, i.e.  $f_1\ast h_t\equiv 0$ for all $t>0$. But  $f_1\ast h_t\rightarrow f_1$ as $t\rightarrow 0$ in the sense of distribution.
Therefore $f_1=0$ which contradicts that $f_1$ is nonzero. This proves (a). Applying this on $f_1-f_2$ we get (c).

For (b) it is enough to show that $R(\ell_x f_1)\neq 0$ implies that  $\Delta^{-1}R(\ell_x f_1)\neq 0$ and $\Delta R(\ell_x f_1)\neq
0$.  Indeed $\Delta^{-1}R(\ell_x f_1) = 0$ implies $R(\ell_x f_1)=\Delta\Delta^{-1}R(\ell_x f_1)=0$.
On the other hand if  $\Delta R(\ell_x f_1)= 0$, then $\langle
\Delta R(\ell_x f_1), \psi\rangle=0$ and hence $\langle R(\ell_x f_1), \Delta
\psi\rangle=0$ for all $\psi\in C^2(G//K)$.  Since  for any $\phi\in C^2(G//K)$,
$\what{\phi}(\lambda)(\lambda^2+\rho^2)^{-1}\in C^2(\what{G//K})$ (see subsection \ref{pri-schw}),
$\phi$ can be written as $\phi=\Delta\psi$ for some $\psi\in
C^2(G//K)$. Thus  $\langle R(\ell_x f_1),
\phi\rangle=0$ for any $\phi\in C^2(G//K)$, i.e.  $R(\ell_x f_1)=0$.
\end{proof}
\begin{lemma}\label{eigen-for-radial}
Suppose that the support of the (distributional) spherical Fourier transform $\what{f}$ of $f\in L^{2, \infty}(G//K)$ is $\{\alpha\}$ for some $\alpha>0$. Then
$f=c\phi_\alpha$ for some constant $c$.
\end{lemma}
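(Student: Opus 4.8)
The plan is to show that the support hypothesis forces $\what{f}$ to be a distribution supported at the single point $\alpha$, hence a finite linear combination of the Dirac mass $\delta_\alpha$ and its derivatives, and then to translate this back into the statement that $f = P(\partial_\lambda)\phi_\lambda|_{\lambda=\alpha}$ for some one-variable polynomial $P$. Once this is in hand, Lemma \ref{polynomial} finishes the argument immediately: since $f\in L^{2,\infty}(X)$ by hypothesis while a \emph{nonconstant} $P$ would put $P(\partial_\lambda)\phi_\lambda|_{\lambda=\alpha}$ outside $L^{2,\infty}(X)$ (here $\alpha>0$ is used), $P$ must be constant, say $P\equiv c$, and therefore $f=c\phi_\alpha$.

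To carry out the first step cleanly I would pass to the Euclidean side through the Abel transform. Put $F=(\Ab^\ast)^{-1}f\in S(\R)'_{even}$, which is legitimate because $\Ab^\ast\colon S(\R)'_{even}\to C^2(G//K)'$ is an isomorphism (subsection \ref{pri-Abel}). Dualizing the slice-projection identity $\mathcal F(\Ab\psi)=\what\psi$ shows that the Euclidean Fourier transform of $F$ coincides with $\what{f}$ up to the standard normalization; in particular $\mathcal F F$ is supported in $\{\pm\alpha\}$. The classical structure theorem for tempered distributions supported on a finite set then gives that $\mathcal F F$ is a finite combination of $\delta_\alpha^{(j)}$ and $\delta_{-\alpha}^{(j)}$ (symmetrized by evenness), so that $F$ is a finite combination of the functions $t^j e^{\pm i\alpha t}$. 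Using $t^j e^{-i\alpha t}=(-i)^{-j}\partial_\xi^j e^{-i\xi t}|_{\xi=\alpha}$ and folding the $-\alpha$ contribution into $\alpha$ via $\phi_\xi=\phi_{-\xi}$, this reads $F=Q(\partial_\xi)e^{-i\xi t}|_{\xi=\alpha}$ for a polynomial $Q$.

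Applying $\Ab^\ast$ and invoking Lemma \ref{use-abel} converts this into $f=\Ab^\ast\big(Q(\partial_\xi)e^{-i\xi t}|_{\xi=\alpha}\big)=Q(\partial_\xi)\phi_\xi|_{\xi=\alpha}$ as $L^2$-tempered distributions. Both sides are genuine functions on $X$ (the right-hand side smooth, by the estimates \eqref{estimates-phi-lambda-classical-2}), so this is an equality of functions, and Lemma \ref{polynomial} applies with $P=Q$ to force $Q$ constant, yielding $f=c\phi_\alpha$. I note that one may bypass the Abel transform altogether: writing $\langle f,\psi\rangle=\langle\what f,\what\psi\rangle$, expanding $\what f=\sum_j b_j\delta_\alpha^{(j)}$ and differentiating under the integral in $\what\psi(\lambda)=\langle\psi,\phi_\lambda\rangle$ (justified by \eqref{estimates-phi-lambda-classical-2}) gives directly $\langle f,\psi\rangle=\sum_j b_j\langle\psi,\partial_\lambda^j\phi_\lambda|_{\lambda=\alpha}\rangle$, i.e. $f=\sum_j b_j\,\partial_\lambda^j\phi_\lambda|_{\lambda=\alpha}$, after which the argument is identical.

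The step I expect to demand the most care is the first one: transferring the support condition across the correspondence and invoking the point-support structure theorem in the weak setting of $S(\R)'_{even}$ rather than in $\mathcal D'(\R)$. The essential simplification is that $\alpha>0$, so the support $\{\pm\alpha\}$ avoids the origin where the even and odd parts interact; away from $0$ a continuous functional on even test functions is pinned down by genuine local jets at $\pm\alpha$, and the standard structure theorem applies verbatim. Everything else — differentiation under the integral sign, the identification of $\mathcal F F$ with $\what f$, and the reduction to Lemma \ref{polynomial} — is routine once this point is settled.
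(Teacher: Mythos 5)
Your proposal is correct and follows essentially the same route as the paper's own proof: transfer to $\R$ via $(\Ab^\ast)^{-1}$, identify $\mathcal F((\Ab^\ast)^{-1}f)$ with $\what{f}$ by dualizing the slice-projection theorem, apply the point-support structure theorem to get $f=P(\partial_\lambda)\phi_\lambda|_{\lambda=\alpha}$ via Lemma \ref{use-abel}, and conclude with Lemma \ref{polynomial}. The only cosmetic difference is that you track the support as $\{\pm\alpha\}$ and fold by evenness, whereas the paper works directly with even distributions supported at $\{\alpha\}$.
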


\begin{proof}
 Since $f$ is an $L^2$-tempered distribution (Proposition \ref{summary-prop} (ii)), $(\Ab^\ast)^{-1} f$ is an even tempered distribution on $\R$ (see subsection \ref{pri-Abel}).  We recall that $C^2(\what{G//K})=S(\R)_{even}$. The Euclidean Fourier transform of $(\Ab^\ast)^{-1}$ in the sense of distribution denoted by
$\mathcal F((\Ab^\ast)^{-1}f)$ is same as the spherical Fourier transform of $f$ in the sense of $L^2$-tempered distribution denoted by $\what{f}$.  Indeed, we take  $\phi, \psi\in S(\R)_{even}$ such that $\mathcal F(\psi)=\phi$. As Abel transform  is an isomorphism between $C^2(G//K)$ and $ S(\R)_{even}$, there is $g\in C^2(G//K)$ such that $\Ab g=\psi$, hence by slice-projection theorem $\what{g}=\mathcal F(\psi)$. Then we have
    \begin{eqnarray*}
     \langle \mathcal F((\Ab^\ast)^{-1}f), \phi\rangle &= \langle (\Ab^\ast)^{-1}f, \psi \rangle
     = \langle (\Ab^\ast)^{-1}f, \Ab g \rangle\\
     &= \langle \Ab^\ast (\Ab^\ast)^{-1}f, g \rangle = \langle f, g \rangle = \langle \what{f}, \what{g} \rangle\\
     &=\langle \what{f}, \mathcal F(\psi) \rangle=\langle \what{f}, \phi \rangle.
    \end{eqnarray*}
    Thus $\langle \mathcal F((\Ab^\ast)^{-1}f), \phi\rangle=\langle \what{f}, \phi \rangle$ where in the left hand side $\phi$ is interpreted as a function of $S(\R)_{even}$ and on the right hand side $\phi$ is an element of $C^2(\what{G//K})$.
Therefore    $\mathcal F((\Ab^\ast)^{-1} f)$ is supported on $\{\alpha\}$.

Therefore by \cite[Theorem 6.25]{Rudin-FA},    \[(\Ab^\ast)^{-1} f(t)=[P_1(\partial_\lambda) e^{i\lambda t}+ P_2(\partial_\lambda) e^{-i\lambda t}]|_{\lambda=\alpha}\] for two polynomials $P_1$ and $P_2$.

 As $\phi_\lambda=\phi_{-\lambda}$ we have by Lemma \ref{use-abel}, $f=P(\partial_\lambda) \phi_\lambda|_{\lambda=\alpha}$ for some polynomial $P$.
Since $f\in L^{2, \infty}(X)$, by Lemma \ref{polynomial} the polynomial is constant. Hence $f=c \phi_\alpha$ for some constant $c$.
\end{proof}
\begin{lemma} \label{eigen-for-nonradial}
Suppose that  for a function $f\in L^{2, \infty}(X)$, $\wtilde{f}$ is supported on the sphere of radius $\alpha>0$ in $\R^+\times B$. Then $f=\mathcal P_\alpha F$ for some $F\in L^2(B)$.
\end{lemma}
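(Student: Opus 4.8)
The plan is to reduce the statement to the radial case already settled in Lemma \ref{eigen-for-radial}, and then to bootstrap the resulting information about the radializations of all translates of $f$ into an eigenvalue equation for $f$ itself, at which point Proposition \ref{summary-prop} (viii) delivers the representation $f = \mathcal P_\alpha F$.

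First I would fix $x \in G$ and examine the radial function $R(\ell_x f)$. Since the $G$-invariant measure on $X$ makes every $\ell_x$ an isometry of $L^{2,\infty}(X)$, and radialization satisfies $\|R h\|_{2,\infty} \le \|h\|_{2,\infty}$ (Section \ref{prelim}), we have $R(\ell_x f) \in L^{2,\infty}(G//K)$ with $\|R(\ell_x f)\|_{2,\infty} \le \|f\|_{2,\infty}$. By Observation \ref{observation-2}, $\mathrm{Suppt}\, \what{R(\ell_x f)} \subseteq \{\alpha\}$. Hence $R(\ell_x f)$ is either zero or a radial $L^{2,\infty}$-function whose spherical Fourier transform is supported exactly on $\{\alpha\}$; in the latter case Lemma \ref{eigen-for-radial} gives $R(\ell_x f) = c(x)\phi_\alpha$. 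Thus in all cases $R(\ell_x f) = c(x)\phi_\alpha$ for some scalar $c(x)$ (with $c(x)=0$ allowed), for every $x \in G$.

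Next I would promote this to an eigenvalue equation for $f$. Put $g = \Delta f + (\alpha^2 + \rho^2) f$, which is a well-defined $L^2$-tempered distribution because $\Delta$ maps $C^2(X)$ continuously into itself. Using that $\Delta$ commutes with both the translations $\ell_x$ and with $R$ (Section \ref{preparation} (4)), interpreted on distributions by duality, I would compute $R(\ell_x g) = (\Delta + (\alpha^2+\rho^2)) R(\ell_x f) = c(x)(\Delta + (\alpha^2+\rho^2))\phi_\alpha = 0$, since $\phi_\alpha$ is an eigenfunction of $\Delta$ with eigenvalue $-(\alpha^2+\rho^2)$. Thus $R(\ell_x g) = 0$ for all $x \in G$. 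Now the heat-kernel argument in the proof of Lemma \ref{radial-nonradial} (a) applies to the $L^2$-tempered distribution $g$: testing $\ell_x g$ against the radial heat kernels $h_t$ forces $g \ast h_t \equiv 0$ for all $t > 0$, and since $g \ast h_t \to g$ distributionally as $t \to 0$ we conclude $g = 0$. Therefore $\Delta f = -(\alpha^2+\rho^2) f$. By elliptic regularity $f$ coincides with a smooth eigenfunction, and since $f \in L^{2,\infty}(X)$ and $\alpha \in \R^\times$, Proposition \ref{summary-prop} (viii) yields $f = \mathcal P_\alpha F$ for some $F \in L^2(B)$, as desired.

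The main obstacle is the distributional bookkeeping in the third paragraph: $\Delta f$ is a priori only a tempered distribution, so one cannot invoke Lemma \ref{radial-nonradial} (a) as a black box, since it is stated for $L^{2,\infty}$-functions. Instead one must check that its proof remains valid for the distribution $g$, namely that the commutations $R\ell_x\Delta = \Delta R\ell_x$ hold on distributions and that the heat kernels $h_t$ still serve as an approximate identity in the distributional topology. Once $g = 0$ is secured the passage to $\mathcal P_\alpha F$ is immediate.
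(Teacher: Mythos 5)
Your proof is correct and follows essentially the same route as the paper: radialize the translates $\ell_x f$, apply Observation \ref{observation-2} and Lemma \ref{eigen-for-radial} to get the eigenvalue equation for each $R(\ell_x f)$, deduce $\Delta f=-(\alpha^2+\rho^2)f$ via the uniqueness statement of Lemma \ref{radial-nonradial}, and finish with Proposition \ref{summary-prop} (viii). Your explicit handling of $g=\Delta f+(\alpha^2+\rho^2)f$ as a distribution and the remark that Lemma \ref{radial-nonradial} must be rerun for distributions is a point the paper passes over silently, but it is the same argument.
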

\begin{proof} By Observation \ref{observation-2},  for any $x\in G$ either
$R(\ell_x f)$ is  zero or its spherical Fourier transform is supported on $\{\alpha\}$. We also note that since $f \in L^{2, \infty}(X)$, $R(\ell_x f)\in L^{2, \infty}(G//K)$. Therefore by Lemma \ref{eigen-for-radial}, $\Delta R(\ell_x f)=-(\alpha^2+\rho^2) R(\ell_x f)$ for all $x\in G$. That is
 $R(\ell_x \Delta f)= R(\ell_x [-(\alpha^2+\rho^2) f])$ for all $x\in G$. Hence by Lemma \ref{radial-nonradial} (c) $\Delta f = -(\alpha^2+\rho^2) f$. Since $f\in L^{2, \infty}(X)$, by Proposition \ref{summary-prop} (viii), we have $f=\mathcal P_\alpha u$ for some $u \in L^2(B)$.
\end{proof}
\section{Concluding Remarks}
(1) As mentioned earlier, Lemma \ref{eigen-for-nonradial} may be considered as an independent result. We have the following generalization. See \cite[pp. 205]{Howd-Reese}, \cite[Lemma 2.2]{Str-Roe} for Euclidean results of this genre.
\begin{proposition} \label{gen-eigen-for-nonradial}
Suppose that a locally integrable  function $f$ on $X$ satisfies $f(x)(1+|x|)^{-M}\in L^{2, \infty}(X)$ for some fixed nonnegative integer $M$ and $\wtilde{f}$ is supported on the sphere $\{\alpha\}\times B$ of radius $\alpha>0$ in $\R^+\times B$. Then $(\Delta+\alpha^2+\rho^2)^{M+1} f=0$, i.e. $f$ is  a generalized eigenfunction of $\Delta$ with eigenvalue $-(\alpha^2+\rho^2)$. In particular if $M=0$ then $f$ is an eigenfunction.
\end{proposition}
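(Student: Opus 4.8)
The plan is to follow the architecture of the proof of Lemma \ref{eigen-for-nonradial} --- radializing and translating to reduce to the $K$-biinvariant situation --- but now allowing the relevant polynomial in $\partial_\lambda$ to have degree up to $M$ instead of forcing it to be constant. First I would record that $f$ is an $L^2$-tempered distribution: multiplication by $(1+|x|)^M$ sends $C^2(X)$ into itself (it only shifts the exponent $r$ in the seminorms $\gamma_{r,D}$), so writing $\langle f,\phi\rangle=\langle f(1+|\cdot|)^{-M},(1+|\cdot|)^M\phi\rangle$ and using $f(1+|\cdot|)^{-M}\in L^{2,\infty}(X)\subset C^2(X)'$ shows $f\in C^2(X)'$; hence $\wtilde f$ and its distributional support are meaningful. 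Next, exactly as in Observation \ref{observation-2} --- whose argument uses only that $f$ is an $L^2$-tempered distribution, together with Propositions \ref{support-radialization} and \ref{HFT-translation} --- for every $x\in G$ the radial $L^2$-tempered distribution $R(\ell_x f)$ has spherical transform $\what{R(\ell_x f)}$ supported in $\{\alpha\}$. Moreover, since the weight $(1+|\cdot|)^{-M}$ is radial it factors through $R$, i.e. $R(\ell_x f)\,(1+|\cdot|)^{-M}=R\big(\ell_x f\cdot(1+|\cdot|)^{-M}\big)$, and as translation is measure preserving on $X$ and $R$ is a contraction on $L^{2,\infty}$, one gets $R(\ell_x f)\,(1+|\cdot|)^{-M}\in L^{2,\infty}(G//K)$.

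With $\what{R(\ell_x f)}$ supported at the single point $\alpha$, the first half of the proof of Lemma \ref{eigen-for-radial} (the Abel transform, Rudin's structure theorem \cite[Theorem 6.25]{Rudin-FA} for distributions supported at a point, and Lemma \ref{use-abel}) applies verbatim to the $L^2$-tempered distribution $R(\ell_x f)$ and yields $R(\ell_x f)=P_x(\partial_\lambda)\phi_\lambda|_{\lambda=\alpha}$ for some polynomial $P_x$. The crucial new step is to bound $\deg P_x\le M$. This is the weighted analogue of Lemmas \ref{polynomial} and \ref{polynomial-1}: one shows that $P(\partial_\lambda)\phi_\lambda|_{\lambda=\alpha}\,(1+|\cdot|)^{-M}\in L^{2,\infty}(X)$ forces $\deg P\le M$. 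The convolution reduction of Lemma \ref{polynomial} carries over after adjusting the vanishing order of $\what\psi$ at $\alpha$ so that convolution isolates the derivative $\partial_\lambda^{M+1}\phi_\lambda|_{\lambda=\alpha}$ modulo lower-order derivatives $\partial_\lambda^{j}\phi_\lambda|_{\lambda=\alpha}$ with $j\le M$; since these grow like $t^{j}e^{-\rho t}$ with $j\le M$ they already satisfy $\partial_\lambda^{j}\phi_\lambda|_{\lambda=\alpha}\,(1+|\cdot|)^{-M}\in L^{2,\infty}$ and may be discarded. One is thus reduced to contradicting $\partial_\lambda^{M+1}\phi_\lambda|_{\lambda=\alpha}\,(1+|\cdot|)^{-M}\in L^{2,\infty}$, which by the expansion of $\phi_\lambda$ used in Lemma \ref{polynomial-1} reduces, after discarding terms that are $O(e^{-\rho t})$ up to bounded oscillation (hence in $L^{2,\infty}$), to the leading asymptotics $t\,e^{-\rho t}\big(a(\alpha)\cos\alpha t+b(\alpha)\sin\alpha t\big)$; the translation-by-$\pi/2\alpha$ argument of Lemma \ref{polynomial-1} then produces $te^{-\rho t}\in L^{2,\infty}([1,\infty),e^{2\rho t}dt)$, a contradiction. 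The underlying arithmetic is the elementary equivalence $t^k e^{-\rho t}(1+t)^{-M}\in L^{2,\infty}([1,\infty),e^{2\rho t}dt)\iff k\le M$, replacing the $k=0$ versus $k=1$ dichotomy. I expect this weighted degree estimate to be the main obstacle, and in particular checking that convolution with $\psi\in C^2(G//K)$ respects the weighted space (a weighted form of Proposition \ref{summary-prop}(iv)), so that the reduction of Lemma \ref{polynomial} is legitimate.

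Finally I would exploit that $\Delta$ strictly lowers the degree. Since $\Delta\phi_\lambda=-(\lambda^2+\rho^2)\phi_\lambda$, we have $(\Delta+\alpha^2+\rho^2)\phi_\lambda=(\alpha^2-\lambda^2)\phi_\lambda$, and as $\alpha^2-\lambda^2$ vanishes to first order at $\lambda=\alpha$, the Leibniz rule gives $(\Delta+\alpha^2+\rho^2)\big(P(\partial_\lambda)\phi_\lambda|_{\lambda=\alpha}\big)=Q(\partial_\lambda)\phi_\lambda|_{\lambda=\alpha}$ with $\deg Q=\deg P-1$ when $\deg P\ge 1$. Consequently $(\Delta+\alpha^2+\rho^2)^{M+1}$ annihilates every $P_x(\partial_\lambda)\phi_\lambda|_{\lambda=\alpha}$ with $\deg P_x\le M$, that is $(\Delta+\alpha^2+\rho^2)^{M+1}R(\ell_x f)=0$ for all $x\in G$.

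Because $\Delta^n$ commutes with both $R$ and the translations $\ell_x$ (see Section \ref{preparation}(4)), setting $g=(\Delta+\alpha^2+\rho^2)^{M+1}f$ gives $R(\ell_x g)=0$ for every $x\in G$. Arguing as in Lemma \ref{radial-nonradial}(a), testing against the heat kernel $h_t$ (which is radial, so $\langle\ell_x g,h_t\rangle=\langle\ell_x g,Rh_t\rangle=\langle R(\ell_x g),h_t\rangle=0$) yields $g\ast h_t\equiv0$ for all $t>0$, and letting $t\to0$ in the distributional sense gives $g=0$, which is the assertion $(\Delta+\alpha^2+\rho^2)^{M+1}f=0$. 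When $M=0$ the polynomial $P_x$ is forced to be constant, so each $R(\ell_x f)$ is a genuine eigenfunction and the argument collapses to Lemma \ref{eigen-for-nonradial}, recovering that $f$ itself is an eigenfunction.
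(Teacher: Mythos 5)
Your proposal follows essentially the same route as the paper's proof: radialize and translate to obtain $R(\ell_x f)=P_x(\partial_\lambda)\phi_\lambda|_{\lambda=\alpha}$ via Observation \ref{observation-2} and the first half of Lemma \ref{eigen-for-radial}, bound $\deg P_x\le M$ by a weighted version of Lemmas \ref{polynomial} and \ref{polynomial-1} (which you actually flesh out in more detail than the paper does), annihilate with $(\Delta+\alpha^2+\rho^2)^{M+1}$ (your Leibniz computation is exactly the content of the paper's Lemma \ref{who-gen-eigen}), and conclude from $R(\ell_x g)=0$ for all $x$ via Lemma \ref{radial-nonradial}. The one step to tighten is the passage from $f(1+|\cdot|)^{-M}\in L^{2,\infty}(X)$ to $\ell_x f\,(1+|\cdot|)^{-M}\in L^{2,\infty}(X)$: measure invariance of translation alone gives only $\ell_x\bigl(f(1+|\cdot|)^{-M}\bigr)\in L^{2,\infty}(X)$, and one also needs the submultiplicativity $(1+|xy|)\le(1+|x|)(1+|y|)$ to trade $(1+|x^{-1}y|)^{-M}$ for $(1+|y|)^{-M}$, which is precisely how the paper argues.
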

We include  a sketch of the proof.
\begin{proof} We have $\ell_x (f(y)/(1+|y|)^M) =\ell_x f(y)/(1+|x^{-1}y|)\in L^{2, \infty}(X)$. Since $(1+|xy|)<(1+|x|)(1+|y|)$ (\cite[Prop. 4.6.11]{GV}, $\ell_xf(y)/(1+|y|)\in L^{2, \infty}(X)$. Now  as $R(\ell_x f)(y)/(1+|y|)^M)= R(\ell_x f)(y)/(1+|y|)^M)$, we have $R(\ell_x f)(y)/(1+|y|)^M\in L^{2, \infty}(G//K)$. Thus $R(\ell_x f)$ is a $L^2$-tempered distribution.
By Observation \ref{observation-2},  if for some $x\in G$, $R(\ell_x f)\neq 0$ then $\what{R\ell_x f}$ is supported on $\{\alpha\}$.
We fix $x\in G$, such that $R(\ell_x f)\neq 0$. Proceeding as the proof of Lemma \ref{eigen-for-radial} we conclude that $R(\ell_x f)=P_x(\partial_\lambda) \phi_\lambda|_{\lambda=\alpha}$ where the polynomial $P_x$ depends on $x\in G$. Hence by Lemma \ref{who-gen-eigen} proved below, $(\Delta+\alpha^2+\rho^2)^{\deg P_x+1} R(\ell_x f)=0$.  However, the condition $R(\ell_xf)/(1+|\cdot|)^M\in L^{2, \infty}(G//K)$ puts an upper bound for the degree of polynomial, precisely $\deg P_x\le M$ as can be proved going through the steps similar to Lemma \ref{polynomial-1}. Thus for all $x\in G$, $(\Delta+\alpha^2+\rho^2)^{M+1} R(\ell_x f)=0$. That is $R(\ell_x (\Delta+\alpha^2+\rho^2)^{M+1} f)=0$ and hence by Lemma \ref{radial-nonradial}, $(\Delta+\alpha^2+\rho^2)^{M+1} f=0$.
\end{proof}

Now we shall prove the lemma used in the proposition above.
\begin{lemma}\label{who-gen-eigen}
If $e_\lambda$ is an eigenfunction of $\Delta$ with eigenvalue $A(\lambda)$ then for any polynomial $P$ in one variable of degree  $m\in \N$, $(\Delta -A(\lambda))^{m+1} P(\partial_\lambda) e_\lambda =0$ i.e. $P(\partial_\lambda) e_\lambda$ is a generalized eigenfunction of $\Delta$ with eigenvalue $A(\lambda)$.
\end{lemma}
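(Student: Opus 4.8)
The plan is to exploit the fact that the Laplacian $\Delta$, which acts in the space variable, commutes with $\partial_\lambda$, differentiation in the spectral parameter. Set $u_k=\partial_\lambda^k e_\lambda$ for $k\ge 0$, so that $u_0=e_\lambda$ and, once $\deg P=m$, the function $P(\partial_\lambda)e_\lambda$ is a linear combination of $u_0,\dots,u_m$. First I would differentiate the defining relation $\Delta e_\lambda=A(\lambda)e_\lambda$ exactly $k$ times in $\lambda$; using that $\Delta$ commutes with $\partial_\lambda$ and applying the Leibniz rule on the right-hand side gives
\[
\Delta u_k=\sum_{j=0}^k\binom{k}{j}A^{(j)}(\lambda)\,u_{k-j},
\]
where $A^{(j)}$ denotes the $j$-th derivative of $A$.

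Separating off the $j=0$ term produces the key structural identity
\[
(\Delta-A(\lambda))u_k=\sum_{j=1}^k\binom{k}{j}A^{(j)}(\lambda)\,u_{k-j},
\]
which says that $\Delta-A(\lambda)$ carries $u_k$ into the span of the strictly lower-order derivatives $u_0,\dots,u_{k-1}$; in particular $(\Delta-A(\lambda))u_0=0$. Next I would fix the value of $\lambda$ at which everything is evaluated and introduce the nested subspaces $V_k=\mathrm{span}\{u_0,\dots,u_k\}$, with the convention $V_{-1}=\{0\}$. The identity above shows precisely that $\Delta-A(\lambda)$ maps $V_k$ into $V_{k-1}$ for every $k$. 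Iterating, $(\Delta-A(\lambda))^{m+1}$ maps $V_m$ into $V_{-1}=\{0\}$ (there are exactly $m+1$ index drops from $m$ down to $-1$), and since $P(\partial_\lambda)e_\lambda\in V_m$ the asserted identity $(\Delta-A(\lambda))^{m+1}P(\partial_\lambda)e_\lambda=0$ follows immediately. The whole argument is thus a finite-dimensional flag computation on the filtration $V_{-1}\subset V_0\subset\cdots\subset V_m$.

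The one point demanding care, and the main (though mild) obstacle, is the justification that $\Delta$ and $\partial_\lambda$ genuinely commute on the family $e_\lambda$, that is, $\Delta\partial_\lambda^k e_\lambda=\partial_\lambda^k\Delta e_\lambda$. This is an interchange of a spatial differential operator with differentiation in the parameter, legitimate provided $e_\lambda(x)$ depends smoothly on $(x,\lambda)$ with derivatives controlled locally uniformly. In the intended application $e_\lambda=\phi_\lambda$, this is furnished by the estimate \eqref{estimates-phi-lambda-classical-2}, which bounds $\partial_\lambda^n\phi_\lambda$ and thereby legitimizes differentiating under $\Delta$; for the abstract statement one simply assumes the family has such smooth dependence. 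Once the commutation is granted, no further analytic input is required.
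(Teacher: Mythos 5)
Your proof is correct and is essentially the paper's argument: both rest on differentiating $\Delta e_\lambda=A(\lambda)e_\lambda$ in $\lambda$ and applying the Leibniz rule to see that $(\Delta-A(\lambda))\partial_\lambda^k e_\lambda$ lies in the span of the lower-order derivatives $\partial_\lambda^j e_\lambda$, $j<k$. The paper phrases the iteration as an explicit induction on the degree while you package it as a filtration $V_{-1}\subset V_0\subset\cdots\subset V_m$ being lowered one step at a time, which is a cleaner presentation of the same computation (and your remark on justifying the commutation of $\Delta$ with $\partial_\lambda$ is a point the paper leaves implicit).
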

\begin{proof} It suffices to show that $(\Delta -A(\lambda))^{m+1} \partial_\lambda^m e_\lambda =0$, which can be verified by
straightforward computation  for $m=1, 2$. Then we use induction. Suppose the result is true for $m=1, 2, \ldots, n-1$. Now,
  \[(\Delta -A(\lambda))^{n+1} \partial_\lambda^n e_\lambda=(\Delta -A(\lambda))^{n}[\partial_\lambda^n (A(\lambda)e_\lambda) - A(\lambda) \partial_\lambda^n e_\lambda].\] Expanding the part in square bracket $[\ldots]$ in the right hand side above by Leibnitz rule we see that each term in it is of the form $C\partial_\lambda^r A(\lambda)
\partial_\lambda^{n-r} e_\lambda$ for $r=1,2, \ldots, n$. From  induction hypothesis it follows that  $(\Delta -A(\lambda))^{n} \partial_\lambda^{n-r} e_\lambda=0$. This completes the proof.
\end{proof}

Proposition \ref{gen-eigen-for-nonradial} vindicates a generalization of Theorem \ref{main-result}. For a fixed $M>0$ we define a weighted norm $\|\cdot\|_M$ in the following way.
For   measurable function $f$ on $X$, let  $g(x)=f(x)(1+|x|)^{-M}$. Then  $\|f\|_M=\|g\|_{2, \infty}$.
\begin{theorem} \label{towards-distribution-result}
 Let $f$ be a nonzero measurable function on $X$ with $\|f\|_M<\infty$.
Suppose for  constants $c_1\ge \rho^2, c_2\le 1/\rho^2$
\[\lim_{n\to \infty} \|\Delta^n f\|_M^{1/n}=c_1,\,\, \lim_{n\to \infty} \|\Delta^{-n} f\|_M^{1/n}=c_2.\]
Let $\beta=\sqrt{1/c_2-\rho^2} \text{ and } \alpha=\sqrt{c_1-\rho^2}$. Then we have  conclusions {\em (a)} and {\em (b)} of {\em Theorem \ref{main-result}}, while {\em (c)} and {\em (d)} of that theorem are replaced by
\begin{enumerate}
\item[{\em (c)}] If $c_1c_2=1$ then $f$ is  a generalized eigenfunction  with eigenvalue $-c_1$,
\item[{\em (d)}] The annulus $\mathbb A_\beta^\alpha$ may reduce to a ball around origin and may also collapse to the origin.
\end{enumerate}
\end{theorem}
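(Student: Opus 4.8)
The plan is to run the proof of Theorem \ref{main-result} almost verbatim, the only genuinely new input being the replacement of the two functional-analytic estimates on which that proof rests---the tempered-distribution bound and the convolution bound of Proposition \ref{summary-prop}---by their weighted analogues for $\|\cdot\|_M$. Write $g(x)=f(x)(1+|x|)^{-M}$, so that $g\in L^{2,\infty}(X)$ and $\|f\|_M=\|g\|_{2,\infty}$. First I would record that $f$ is an $L^2$-tempered distribution: for $\phi\in C^2(X)$ the function $(1+|\cdot|)^M\phi$ again lies in $C^2(X)$, whence
\[|\langle f,\phi\rangle|=|\langle g,(1+|\cdot|)^M\phi\rangle|\le \|g\|_{2,\infty}\,\|(1+|\cdot|)^M\phi\|_{2,1}\le C\|f\|_M\,\nu(\phi)\]
for a suitable seminorm $\nu$ of $C^2(X)$, using H\"older's inequality for Lorentz spaces and Proposition \ref{summary-prop} (i). In particular an empty distributional support of $\wtilde f$ forces $f=0$, exactly as for $L^{2,\infty}$-functions.

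The second and main ingredient is a weighted convolution bound: for $\psi\in C^2(G//K)$,
\[\|f\ast\psi\|_M\le \|f\|_M\,\nu(\psi)\]
for some seminorm $\nu$. Here I would use the submultiplicativity $(1+|y|)\le(1+|x|)(1+|x^{-1}y|)$ (\cite[Prop.\ 4.6.11]{GV}) to transfer the weight across the convolution: since $\psi$ is radial,
\[\bigl|(f\ast\psi)(x)(1+|x|)^{-M}\bigr|\le\int_X |g(y)|\,\frac{(1+|y|)^M}{(1+|x|)^M}\,|\psi(y^{-1}x)|\,dy\le\bigl(|g|\ast\Psi\bigr)(x),\]
where $\Psi=(1+|\cdot|)^M|\psi|\in C^2(G//K)$; an application of Proposition \ref{summary-prop} (iv) to $|g|\ast\Psi$ then yields the claim with $\nu(\psi)=\nu(\Psi)$.

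With these two estimates in hand, parts (a) and (b) follow by repeating the argument for Theorem \ref{main-result} word for word, merely writing $\|\cdot\|_M$ in place of $\|\cdot\|_{2,\infty}$ and invoking the two weighted estimates wherever Proposition \ref{summary-prop} (i), (iv) were used; in particular the H\"older step becomes $|\langle\Delta^kf,\psi_k\rangle|\le C\|\Delta^kf\|_M\,\nu(\psi_k)$ and the lower-radius step becomes $\|\Delta^n f\|_M=\|f\ast\Delta^n\psi\|_M\le\|f\|_M\,\nu(\Delta^n\psi)$. For (c), $c_1c_2=1$ again forces $\alpha=\beta$, so $\wtilde f$ is supported on the single sphere $\{\alpha\}\times B$; since $f(x)(1+|x|)^{-M}\in L^{2,\infty}(X)$, Proposition \ref{gen-eigen-for-nonradial} applies directly and gives $(\Delta+\alpha^2+\rho^2)^{M+1}f=0$, i.e.\ $f$ is a generalized eigenfunction with eigenvalue $-c_1$. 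For (d), the point is that the obstruction used in Theorem \ref{main-result} (d)---Proposition \ref{summary-prop} (v), forbidding an eigenfunction of eigenvalue $-\rho^2$ from lying in $L^{2,\infty}(X)$---is no longer available: for $M\ge1$ the spherical function $\phi_0$, which fails to lie in $L^{2,\infty}(X)$ precisely because of the factor $(1+|x|)$ in its decay, satisfies $\phi_0(x)(1+|x|)^{-M}\in L^{2,\infty}(X)$, and $\Delta^{\pm n}\phi_0=(-\rho^2)^{\pm n}\phi_0$ gives $c_1=\rho^2$, $c_2=1/\rho^2$; thus $\phi_0$ witnesses that $\mathbb A_\beta^\alpha$ may collapse to the origin, while $\beta=0$ (that is $c_2=1/\rho^2$) gives the reduction to a ball.

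The hard part will be the weighted convolution bound: one has to check that the polynomial weight is pulled through $f\ast\psi$ with the correct orientation of the submultiplicative inequality, and that the radial kernel $(1+|\cdot|)^M|\psi|$ still lies in $C^2(G//K)$ so that Proposition \ref{summary-prop} (iv) is applicable. A secondary point requiring care---already internal to Proposition \ref{gen-eigen-for-nonradial}---is the degree bound $\deg P_x\le M$ on the polynomial produced when $\what{R(\ell_x f)}$ is supported at a point, since it is exactly this bound that converts single-sphere support into a generalized eigenfunction of order at most $M+1$.
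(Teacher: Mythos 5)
Your proposal is correct and is essentially the paper's intended route: the paper omits the proof of Theorem \ref{towards-distribution-result} with the remark that it is an adaptation of the proof of Theorem \ref{main-result}, and the two weighted estimates you isolate (that $\|f\|_M<\infty$ makes $f$ an $L^2$-tempered distribution via $\|(1+|\cdot|)^M\phi\|_{2,1}\le C\nu(\phi)$, and the transfer of the weight across $f\ast\psi$ by submultiplicativity so that Proposition \ref{summary-prop} (iii)/(iv) applies to $|g|\ast(1+|\cdot|)^M|\psi|$) are exactly the missing ingredients, with (c) then delegated to Proposition \ref{gen-eigen-for-nonradial} and (d) witnessed by $\phi_0$ just as in the paper's concluding remark. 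The only caveat --- shared with the paper, which also leaves it unaddressed --- is that Proposition \ref{gen-eigen-for-nonradial} (and the lemmas behind it) are stated for $\alpha>0$, so the degenerate case $\alpha=\beta=0$ arising in (c) and (d) strictly requires a separate argument for distributions whose spherical transform is supported at the origin.
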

Proof of Theorem \ref{main-result} can be easily adapted to prove this, which we omit for brevity. We only note that under the norm-condition here which is more relaxed than that of Theorem \ref{main-result}, this theorem allows collapsing of the annulus to the origin (see (d) above). This corresponds to the case $c_1=1/c_2=\rho^2$, hence $c_1c_2=1$ and thus is a subcase of (c). Precisely in this case $f$ is a generalized  eigenfunction of $\Delta$ with eigenvalue $-\rho^2$, a particular case of which is $\phi_0$.
\vspace{.15in}

(2) Given the similarity of the setting, it is not surprising that our line of argument sometimes goes near the study of real inverse Paley-Wiener theorems and the characterization of eigenfunctions of $\Delta$ mentioned earlier.  We pause briefly to point out the distinguishing features of our study.  In \cite{Andersen2004-1} Andersen considered real inverse Paley-wiener theorem characterizing functions in $L^2(X)$ whose Fourier transform is supported in  a ball around origin in $\R^+\times B$. It is clear from the proof of Theorem \ref{main-result} that only positive integral powers of $\Delta$ is required for this. As \cite{Andersen2004-1} is dealing with $L^2$-functions, Plancherel theorem has a crucial presence in the proof. But as explained in Section 3, this precludes the possibility of the support to degenerate and allow the function  to be an eigenfunction. On the other hand  aim of \cite{KRS2014, RS2014}  is to obtain a  characterization of the eigenfunction of $\Delta$. However the hypothesis of those theorems are strong enough  to determine the precise annulus around origin inside which $\wtilde{f}$ is supported.
\vspace{.2in}

(3) We recall that through the Iwasawa decomposition $G=NAK$, $X=G/K$ can be identified with the solvable Lie group $N\rtimes A$. Thus the rank one Riemannian symmetric spaces of noncompact type becomes a subclass  of Damek-Ricci spaces (known also as  $NA$ groups). Indeed  symmetric spaces are the most distinguished prototypes of $NA$ groups, even though they account for a very thin subcollection (see \cite{ADY}). In general a Damek-Ricci space is a Riemannian manifold and a solvable Lie group but not a symmetric space. To deal with a general Damek-Ricci space say $S$  one faces many fresh difficulties.  A major challenge is the absence of semisimple machinery which enters the picture through the $G$-action on $X=G/K$. A particular discomfort arises as we cannot decompose a function on $S$ in   $K$-types; a  very useful tool while working on symmetric spaces. The sense of {\em radiality} in these spaces is not connected with  group action.  Keeping this in mind we have completely avoided such  well-known techniques for symmetric spaces. The proof given here is thus  readily extendable to harmonic $NA$ groups. However   for Damek-Ricci spaces we have to make a compromise, as the characterization of $L^{2, \infty}$-eigenfunction as Poisson transform is still unavailable in the literature, albeit expected.  Precisely  (Theorem  \ref{main-result} (c)) `$f$ is a Possion transform' have to be substituted by a weaker statement `$f$ is  an eigenfunction of $\Delta$ with eigenvalue $-c_1$'.


\begin{thebibliography}{99}
  \bibitem{Andersen2004-1} Andersen, N. B. {\em Real Paley-Wiener theorems for the inverse Fourier transform on a Riemannian symmetric space}. Pacific J. Math. 213 (2004), no. 1, 1–-13.
 \bibitem{Andersen2004-2} Andersen, N. B. {\em Real Paley-Wiener theorems}. Bull. London Math. Soc. 36 (2004), no. 4, 504–-508
 \bibitem{Andersen-DeJu2010} Andersen, N. B.; de Jeu, M.
{\em Real Paley-Wiener theorems and local spectral radius formulas}.
Trans. Amer. Math. Soc. 362 (2010), no. 7, 3613–-3640.
\bibitem{Ank-mult} Anker, ~J-P. {\em $L^p$ Fourier multipliers on Riemannian symmetric spaces of the noncompact type}.
Ann. of Math. (2) 132 (1990), no. 3, 597–-628.
\bibitem{Ank-Sch} Anker, J-P. {\em The spherical Fourier transform of rapidly decreasing functions. A simple proof of a characterization due to Harish-Chandra, Helgason, Trombi, and Varadarajan}. J. Funct. Anal. 96 (1991), no. 2, 331-349.
\bibitem{ADY} Anker,
J-P.; Damek, E.; Yacoub, C. {\em Spherical analysis on harmonic $AN$
groups}. Ann. Scuola Norm. Sup. Pisa Cl. Sci. (4)  23 (1996),
no. 4, 643--679 (1997).
 \bibitem{Bang1990} Bang, H. H. {\em A property of infinitely differentiable functions}. Proc. Amer. Math. Soc. 108 (1990), no. 1, 73–-76
\bibitem{Bang1995}  Bang, H. H. {\em Functions with bounded spectrum}. Trans. Amer. Math. Soc. 347 (1995), no. 3, 1067–-1080.
\bibitem{Bang-Huy2010} Bang, H. H.; Huy, V.N. {\em Behavior of the sequence of norms of primitives of a function}. J. Approx. Theory 162 (2010), no. 6, 1178–-1186.
\bibitem{Bang-Huy2011} Bang, H. H.; Huy, V.N. {\em Behavior of the sequence of norms of primitives of a functions}.
\bibitem{Bou-Sami}
 Boussejra, ~A.; Sami, ~H. {\em Characterization of the $L^p$-range of the Poisson transform in hyperbolic spaces $B({\mathbb F}^n)$}. J. Lie Theory 12 (2002), no. 1, 1--14.
 \bibitem{egu79} Eguchi, ~M. {\em Asymptotic expansions of Eisenstein integrals and Fourier transform on symmetric spaces}.
J. Funct. Anal. 34 (1979), no. 2, 167-–216.
\bibitem{Faraut} Faraut, J. {\em Formule de Gutzmer pour la complexification d'un espace riemannien symétrique}.
Atti Accad. Naz. Lincei Cl. Sci. Fis. Mat. Natur. Rend. Lincei (9) Mat. Appl. 13 (2002), no. 3-4, 233–241.
 \bibitem{Gabardo1989} Gabardo, J-P. {\em Tempered distributions with spectral gaps}. Math. Proc. Cambridge Philos. Soc. 106 (1989), no. 1, 143–-162.
 \bibitem{GV} Gangolli, ~R.; Varadarajan, ~V. ~S. {\em Harmonic analysis of spherical functions on real reductive groups}.
Springer-Verlag, Berlin, 1988.
 \bibitem{Graf} Grafakos, ~L. {\em Classical and Modern Fourier Analysis}. Pearson Education, Inc. N. J. 2004.
 \bibitem{H-C-spherical-1+2} Harish-Chandra {\em Spherical functions on a semisimple Lie group. I, II} Amer. J. Math. 80 (1958), 241–-310 and 553–-613.
 \bibitem{Helga-2}
Helgason, ~S. {\em Groups and geometric analysis.
Integral geometry, invariant differential operators, and spherical functions}. Pure and Applied Mathematics, 113. Academic Press, Inc., Orlando, FL, 1984
\bibitem{Helga-3}
Helgason, ~S. {\em Geometric analysis on symmetric spaces}.
Mathematical Surveys and Monographs, 39. Amer. Math. Soc.,
Providence, RI, 1994.
\bibitem{Helga-Abel} Helgason, ~S. {\em The Abel, Fourier and Radon transforms on symmetric spaces}. Indag.
Math.  16 (2005), no. 3--4,
\bibitem{Howd-1}
Howard, ~R. {\em A note on Roe's characterization of the sine
function}. Proc. Amer. Math. Soc. 105 (1989), no. 3, 658--663.
\bibitem{Howd-Reese}
Howard, ~R.; Reese, ~M. {\em Characterization of eigenfunctions by
boundedness conditions}. Canad. Math. Bull. 35 (1992), no. 2,
204--213.
\bibitem{Ion-Pois-1}
 Ionescu, ~A. ~D. {\em On the Poisson transform on symmetric spaces of real rank one}.
 J. Funct. Anal. 174 (2000), no. 2, 513--523.
  \bibitem{Kot-Nara} Kotake, ~T.; Narasimhan, ~M. S. {\em Regularity theorems for fractional powers of a linear elliptic operator}. Bull. Soc. Math. France 90 (1962) 449-–471.
 \bibitem{Kroz-Stanton}  Kr\"{o}tz, B.; Stanton, R. J. {\em Holomorphic extensions of representations. I. Automorphic functions}. Ann. of Math. (2) 159 (2004), no. 2, 641–724.
 \bibitem{Kumar2014} Kumar, ~P. {\em Fourier restriction theorem and characterization of weak $L^2$ eigenfunctions of the Laplace-Beltrami operator}.
 J. Funct. Anal. 266 (2014), no. 9, 5584-–5597.
 \bibitem{KRS2010} Kumar, ~P.; Ray, ~S. ~K.; Sarkar, ~R. ~P. {\em The role of restriction theorems in harmonic analysis on harmonic $NA$ groups}. J. Funct. Anal. 258 (2010), no. 7, 2453--2482.
 \bibitem{KRS2014} Kumar, ~P.; Ray, ~S. ~K.; Sarkar, ~R. ~P. {\em Characterization of almost $L^p$-eigenfunctions of the Laplace-Beltrami operator}.
Trans. Amer. Math. Soc. 366 (2014), no. 6, 3191--3225.
\bibitem{MRSS} Mohanty, P.; Ray, S. K.; Sarkar, R. P.; Sitaram, A. {\em The Helgason-Fourier transform for symmetric spaces. II}. J. Lie Theory 14 (2004), no. 1, 227–-242.
 \bibitem{Nelson} Nelson, ~E. {\em Analytic vectors}. Ann. of Math. (2) 70 (1959), 572–-615.
\bibitem{Pasquale} Pasquale, A. {\em A Paley-Wiener theorem for the inverse spherical transform}.
Pacific J. Math. 193 (2000), no. 1, 143–176.
 \bibitem{RS2009} Ray, S. K.; Sarkar, R. P. {\em Fourier and Radon transform on harmonic $NA$ groups}. Trans. Amer. Math. Soc. 361 (2009), no. 8, 4269–-4297.
 \bibitem{RS2014} Ray, ~S. ~K.; Sarkar, ~R. ~P. {\em A theorem of Roe and Strichartz for Riemannian symmetric spaces of noncompact type}.
Int. Math. Res. Not. IMRN 2014 (2014), iss. 5, 1273--1288.
 \bibitem{Roe} Roe, ~J.
{\em A characterization of the sine function}. Math. Proc. Cambridge
Philos. Soc. 87 (1980), no. 1, 69--73.
\bibitem{Rudin-FA} Rudin, W. {\em Functional Analysis} McGraw-Hill , Inc. 1973
\bibitem{Sarkar-Chaos1} Sarkar, R. P.  {\em Chaotic dynamics of the heat semigroup on the Damek-Ricci spaces}. Israel J. Math. 198 (2013), no. 1, 487–-508.
\bibitem{Stan-Tom} Stanton, R.  J.; Tomas, P. A. {\em Expansions for spherical functions on noncompact symmetric spaces}. Acta Math. 140 (1978), no. 3-4, 251–-276.
 \bibitem{Stein1957} Stein, ~E. ~M. {\em Functions of exponential type}.  Ann. of Math. (2) 65 (1957), 582–-592.
 \bibitem{S-W}
Stein, ~E. ~M.; Weiss, ~G. {\em Introduction to Fourier analysis
on Euclidean spaces.} Princeton Mathematical Series, No. 32.
Princeton University Press, Princeton, N.J., 1971.
 \bibitem{Str-Roe} Strichartz, ~R. ~S. {\em Characterization of eigenfunctions of the Laplacian by boundedness conditions}.
 Trans. Amer. Math. Soc. 338 (1993), no. 2, 971--979.
\bibitem{Tay}Taylor, ~M. ~E. {\em{$L^p$-estimates on functions of the Laplace operator.}} Duke Math. J. {\bf 58} (1989), no. 3, 773-793.
\bibitem{Tuan-Z} Tuan, V. K.; Zayed, A. I. {\em Paley-Wiener-type theorems for a class of integral transforms}.
J. Math. Anal. Appl. 266 (2002), no. 1, 200–-226.
\end{thebibliography}
\end{document}